

\documentclass{interact}

\usepackage{amsthm}
\usepackage{amsfonts}
\usepackage{amsmath}
\usepackage{changes}
\usepackage{mathrsfs}
\usepackage{graphics}
\usepackage{graphicx}
\usepackage{hyperref}
\usepackage{comment}
\usepackage{color}

\usepackage{algorithm} 
\usepackage{algpseudocode} 

%
%

\def\1{{\sf 1}}

\def\LL{{\sf L}}
\def\DD{{\sf D}}

\def\k{{\sf k}}

\def\w{{\sf w}}\def\z{{\sf z}}
\newtheorem{proposition}{Proposition}[section]
\newtheorem{lemma}[proposition]{Lemma}
\newtheorem{theorem}[proposition]{Theorem}

\newtheorem{corollary}[proposition]{Corollary}
\newtheorem{example}[proposition]{Example}

\def\2{{\sf 1}}


%

\usepackage{hyperref}


\begin{document}
\newcommand{\Sparsify}{\sf{Sparsify}\kern1pt}

\title{A Spectral Approach to Kemeny's Constant}

\author{
A. Abiad
\thanks{\texttt{a.abiad.monge@tue.nl}, Department of Mathematics and Computer Science, Eindhoven University of Technology, The Netherlands. Department of Mathematics and Data Science of Vrije Universiteit Brussel, Belgium}
\quad
\'A. Carmona\thanks{\texttt{angeles.carmona@upc.edu}, Departament de Matem\`atiques, Universitat Polit\`ecnica de Catalunya, Spain}
\quad
A. M. Encinas\thanks{\texttt{andres.marcos.encinas@upc.edu}, Departament de Matem\`atiques, Universitat Polit\`ecnica de Catalunya, Spain}
\quad
M.J. Jim\'enez\thanks{\texttt{maria.jose.jimenez@upc.edu}, Departament de Matem\`atiques, Universitat Polit\`ecnica de Catalunya, Spain} 
\quad
\'A. Samperio
\thanks{\texttt{alvaro.samperio@uva.es}, Instituto de Investigaci\'on en Matem\'aticas, Universidad de Valladolid, Spain} 
}

\maketitle

\begin{abstract}
\noindent 
Kemeny's constant quantifies the expected time for a random walk to reach a randomly chosen vertex, providing insight into the global behavior of a Markov chain. We present a novel eigenvector-based formula for computing Kemeny's constant. Moreover, we analyze the impact of network structure on Kemeny's constant. In particular, we use various spectral techniques, such as spectral sparsification of graphs and eigenvalue interlacing, and show that they are particularly useful in this context for deriving approximations and sharp bounds for Kemeny's constant.
\end{abstract}





\section{Introduction}

Kemeny's constant, a fundamental parameter in the theory of Markov chains, has recently received significant attention within the graph theory community. Originally defined for a discrete, finite, time-homogeneous, and irreducible Markov chain in terms of its stationary vector and mean first passage times, Kemeny's constant finds special relevance in the study of random walks on graphs. It quantifies how quickly a random walker can visit all  nodes of a graph and thus  is a good measure of the connectivity of a graph.

Kemeny's constant has a wide range of applications, including modeling the spread of infectious diseases (predicting how rapidly an outbreak will reach epidemic levels, see e.g. \cite{Yetal2020,Detal2023}), molecular conformation dynamics (identifying the presence or absence of metastable sets, see e.g. \cite{MMLR2024}), and urban road networks (assessing how well connected a network is, see e.g. \cite{CKS2011}). In general, a lower Kemeny's constant indicates higher connectivity, while a higher Kemeny's constant suggests lower connectivity. A central question in these and other applications is: how do changes in the network affect Kemeny's constant? In this work, we focus on how edge sparsification and graph substructures influence Kemeny's constant. Recent studies addressing this topic include \cite{BCK2022,BK2019,kim2023bounds,KLMZ2024}.

Several expressions for computing Kemeny's constant are known in the literature,  e.g., \cite{CaJiMa23,Lo16,Wa17}. In this paper, we introduce a novel closed-form formula for Kemeny's constant, assuming that a certain vector related to the degree vector is an eigenvector, which is a measure of the network regularity.

Next, we examine the effect of  sparsifying a graph (that is, approximating a given weighted graph with a sparser one by retaining only a subset of edges with modified weights) on Kemeny's constant. To derive our approximations, we use a result by Spielman \cite{S2017} (see Lemma \ref{lema:spielman}) and exploit the fact that Kemeny's constant can be expressed in terms of the group inverse of the Laplacian matrix \cite{Wa17}. Furthermore, we show computationally that our approximations often compare favorably to previous approximations on Kemeny's constant (see e.g. the two approximations proposed by Kooij {\it et al.} \cite{KoDu20}) for several graph classes and random graphs.


It is also natural to study how graph structure informs a graph invariant. The effect of the graph structure on the value of Kemeny's constant has recently received some attention in the literature, see \cite{APY2021,derivative,ABCMP2023,BCK2022,FKK2022,KDP2024}. In this regard, spectral techniques can be very powerful. Spectral graph theory seeks to associate a matrix with a graph and to deduce properties of the graph from its \emph{spectrum} (eigenvalues). While Kemeny's constant is known to be expressible in terms of the eigenvalues of certain matrices associated with a graph (see, e.g., \cite{LL2002,Lo16}), many aspects of its spectral properties remain underexplored. In the last part of this paper we explore how graph substructures and eigenvalues provide insights into Kemeny’s constant, and we propose an approach to approximate it  using a subset of the graph's eigenvalues rather than the entire spectrum. To do so we use the technique of eigenvalue interlacing. For instance, we investigate the impact of removing several vertices on Kemeny's constant. Another approach to understanding graph structure is to divide the graph into subgraphs; we explore this by partitioning the vertices into sets and deriving bounds on Kemeny's constant for this case. It is known that adding an edge to a graph may decrease, increase, or leave Kemeny's constant unchanged. Kirkland {\it et al.} \cite{KLMZ2024} provided a quantitative analysis of this behavior for trees of a given order. Following this direction, we investigate this edge behavior for general graphs by deriving eigenvalue bounds and analyzing the effect of removing multiple edges. We show that the obtained spectral bounds are tight for certain graph classes.

\section{Preliminaries}

In this paper, we consider connected networks $G=(V,E,c)$. Let the pair $(V,E)$ denote a graph with $|V|=n$ vertices and $|E|=m$ edges. Moreover, $c:V\times V\longrightarrow[0,\infty)$ assigns a positive value $c(i,j)=c(j,i)=c_{ij}>0$ to each edge $\{i,j\}\in E$. Recall that the \emph{degree} of vertex $i$ is $k_i=\sum\limits_{j=1}^nc_{ij}$, and we call \emph{volume} of $G$ to ${\rm vol}(G)=\sum\limits_{j=1}^nk_i.$ Therefore, for graphs ${\rm vol}(G)=2m.$ The \emph{adjacency matrix} $A$ of a network $G$ is a $n\times n$ symmetric matrix with elements $c_{ij}$ if $\{i,j\}\in E$ and $0$ otherwise.  We denote by ${\sf e}^i$ the $i$-th vector of the standard basis. Throughout, vectors are represented using {\it sans serif} style. For instance, $\k$ is the vector of degrees of vertices of $G$. A network is called {\it regular} if the degree vector is constant. Additionally, for any real number $\alpha$, we define $\alpha^{\#}$ as $\dfrac{1}{\alpha}$ when $\alpha\not=0,$ and $0$ otherwise.  For a square matrix, $M$, $M^\#$ denotes the \emph{group inverse} of $M$, which for symmetric matrices coincides with the Moore-Penrose inverse.

The {\it Laplacian matrix} $L_G$ of $G$ is the $n\times n$ symmetric matrix given by $L_G=D-A$, where $D=\text{diag}(k_1,\ldots,k_n)$ is the diagonal degree matrix. The \emph{normalized Laplacian matrix} of a graph, denoted ${\cal{L}}_G$, is defined as
$${\cal{L}}_G=D^{-1/2}L_GD^{-1/2}=I-D^{-1/2}AD^{-1/2},$$
where $D^{-\frac{1}{2}}AD^{-\frac{1}{2}}$ is called {\it normalized adjacency matrix}.

A random walk on $G$ induces a Markov chain with \emph{transition probability matrix} $P$, defined as $P=D^{-1}A$. For a finite, irreducible Markov chain, the transition probability matrix 
 $P$, its steady state probability vector $\pi$, and the all-ones vector $\sf 1$ satisfy $P{\sf 1}={\sf 1}$ and $\pi^\top P=\pi^\top$. It is known that $\pi_i=\dfrac{k_i}{{\rm vol}(G)}.$ 
 
Kemeny's constant is a fundamental parameter in the study of Markov chains and random walks on graphs. It quantifies the expected number of steps a random walker takes to reach a randomly chosen vertex, regardless of the starting position, under the stationary distribution. Intuitively, Kemeny's constant provides a global measure of how efficiently information or influence propagates through a network. It is particularly useful in assessing the connectivity and mixing properties of a graph. Kemeny and Snell \cite{KeSn60} established a direct connection between Kemeny's constant and random walks
\begin{equation}\label{eq:pim} \tilde{K}(G)=\sum_{i=1}^{n} \pi_i m_{ji}, \end{equation}
where $m_{ji}$ denotes the \emph{mean first passage time} (the expected number of steps required for a random walk to reach vertex 
$i$, starting from vertex $j$). Notably, this sum is constant regardless of the starting vertex $j$.

Some authors define Kemeny's constant as $K(G)=\tilde{K}(G)-1$. We will use the value $K(G)$ in this work.

It is known that the Kemeny constant can be expressed by means of the eigenvalues of several matrices associated with a network. In particular, there is a way to compute Kemeny's constant by using the eigenvalues of the
probability transition matrix. First, observe that since $P$ is similar to the  normalized adjacency matrix, they have the same eigenvalues. Because the normalized adjacency matrix is symmetric all its eigenvalues, and so the eigenvalues of $P$, are real.

 We denote the eigenvalues of the normalized adjacency matrix as 
 $$1=\lambda_1 > \cdots \geq \lambda_n\ge -1.$$
 The eigenvalues of the normalized Laplacian matrix ${\cal{L}}_G$  are denoted by 
 $$0=\mu_1 < \cdots \leq \mu_n\le 2,$$ and they satisfy the relationship $\mu_i = 1 - \lambda_i$, for $i = 1, \ldots, n$. The eigenvalues of the combinatorial Laplacian matrix $L_G$ are denoted by $$0 = \gamma_1 < \gamma_2 \leq \cdots \leq \gamma_n.$$

 It is well-known that Kemeny's constant can be computed in terms of eigenvalues of the normalized Laplacian as
 \begin{equation}\label{Lovasz}
  K(G)=\sum_{j=2}^{n}\frac{1}{\mu_j}\end{equation}
	and hence $K(G)={\rm tr}({\cal L}_G^\#)$; 
  see \cite[Eq. (3.3)]{Lo16}. Furthermore, Wang, {\it et al.} \cite[Theorem 4]{Wa17} showed that Kemeny's constant can also be expressed in terms of the group inverse of the combinatorial Laplacian, specifically, 

\begin{equation}  \label{thm:wangetal}
K(G)={\rm tr}(L_G^\# D)-\dfrac{1}{\rm{vol}(G)}{\k}^{\sf T}L_G^{\#}{{\sf k}}.
\end{equation}
 The same result was recently obtained in \cite{CaJiMa23} using a different approach and later generalized for the case of Schr\"odinger random walks in \cite{CaEnJiMa24}.

\section{An eigenvector-based formula for Kemeny's constant}

In this section, we present a slightly modified expression of \eqref{thm:wangetal} for Kemeny's constant in networks that are nearly regular, as defined below.

Given a network $G=(V,E,c)$, we call  {\it orthogonal degree of $x$} to the value $$w(x)=k(x)-\dfrac{{\rm vol}(G)}{n},$$ and we consider the orthogonal component of the degree vector on the all-one vector ${\sf 1}$ as follows  $$\w=\k-\dfrac{\rm{vol}(G)}{n}{\sf 1}.$$  

The vector $\w$ was introduced in \cite[Eq. (29)]{Wa17} for the case of graphs to obtain the following bounds on Kemeny's constant 
\begin{equation}\label{cotas_degree}
    {\sf tr}(L_G^\# \DD)-\dfrac{||\w||^2}{\gamma_2{\rm vol}(G)}\le K(G)\le {\sf tr}(L_G^\# \DD)-\dfrac{||\w||^2}{\gamma_n{\rm vol}(G)}.
\end{equation}
Observe that $||\w||^2=||\k||^2-\dfrac{{\rm vol}(G)^2}{n}$.

Motivated by the above bounds on Kemeny's constant, we call a network {\it slightly regular} if $\w$ is an eigenvector of the Laplacian matrix $L_G$. Note that every regular network is slightly regular since $\w={\sf 0}$, and we can interpret the orthogonal degree vector as a measure of how far a graph is from being regular. In the last case, the above inequalities become the known identity $K(G)=k {\sf tr}(L_G^\#).$ 
A path on $4$ vertices is an example of slightly regular graph that is not regular. In fact, it is the unique path with this property. Notice that $\w=\frac{1}{2}\big(-1,1,1,-1\big)$ and $L_G\w=2\w.$ 

 \begin{proposition}
 If $G$ is a slightly regular network with associated Laplacian eigenvalue $\gamma$, then 
$$K(G)={\sf tr}(L_G^\# \DD)-\dfrac{\gamma^{\#}}{{\rm vol}(G)}\Big( ||\k||^2-\dfrac{{\rm vol}(G)^2}{n}\Big).$$
\end{proposition}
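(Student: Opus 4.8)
The plan is to start from Wang et al.'s formula \eqref{thm:wangetal},
$$K(G)={\rm tr}(L_G^\# D)-\dfrac{1}{{\rm vol}(G)}\,\k^{\sf T}L_G^{\#}\k,$$
and simplify the quadratic term $\k^{\sf T}L_G^{\#}\k$ using the hypothesis that $\w=\k-\tfrac{{\rm vol}(G)}{n}{\sf 1}$ is an eigenvector of $L_G$ with eigenvalue $\gamma$. The first step is to write $\k=\w+\tfrac{{\rm vol}(G)}{n}{\sf 1}$ and expand
$$\k^{\sf T}L_G^{\#}\k=\w^{\sf T}L_G^{\#}\w+\dfrac{2\,{\rm vol}(G)}{n}\,{\sf 1}^{\sf T}L_G^{\#}\w+\dfrac{{\rm vol}(G)^2}{n^2}\,{\sf 1}^{\sf T}L_G^{\#}{\sf 1}.$$
Since $L_G{\sf 1}={\sf 0}$ and $L_G^\#$ is the group (Moore--Penrose) inverse, $L_G^\#{\sf 1}={\sf 0}$, so the last two terms vanish; this kills the cross term and the ${\sf 1}$-term at once.

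Next I would handle $\w^{\sf T}L_G^{\#}\w$. Because $L_G\w=\gamma\w$ and $\w\perp{\sf 1}$ (so $\w$ lies in the range of $L_G$, and $\gamma\neq 0$ unless $\w={\sf 0}$), we get $L_G^\#\w=\gamma^\#\w$, hence $\w^{\sf T}L_G^{\#}\w=\gamma^\#\|\w\|^2$. (If $\w={\sf 0}$ the graph is regular and both sides are $0$, consistent with $\gamma^\#$ being interpreted as $0$; this degenerate case should be noted.) Substituting back gives
$$K(G)={\rm tr}(L_G^\# D)-\dfrac{\gamma^\#}{{\rm vol}(G)}\,\|\w\|^2,$$
and the final step is the elementary identity $\|\w\|^2=\|\k\|^2-\tfrac{{\rm vol}(G)^2}{n}$ recorded in the excerpt (it follows from $\|\k-\tfrac{{\rm vol}(G)}{n}{\sf 1}\|^2=\|\k\|^2-2\tfrac{{\rm vol}(G)}{n}{\sf 1}^{\sf T}\k+\tfrac{{\rm vol}(G)^2}{n^2}n$ together with ${\sf 1}^{\sf T}\k={\rm vol}(G)$), yielding the claimed formula.

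There is no real obstacle here; the only point requiring a little care is the justification that $L_G^\#$ acts on $\w$ as multiplication by $\gamma^\#$. This rests on the spectral picture: $L_G$ is symmetric positive semidefinite with kernel spanned by ${\sf 1}$ (since $G$ is connected), $L_G^\#$ is diagonalized by the same orthonormal eigenbasis with eigenvalues $\gamma_i^\#$, and $\w$ being an eigenvector of $L_G$ orthogonal to the kernel is automatically an eigenvector of $L_G^\#$ with the reciprocal eigenvalue. I would state this explicitly rather than treat it as obvious, and also remark that it recovers the two-sided bound \eqref{cotas_degree} exactly when $\gamma\in\{\gamma_2,\gamma_n\}$, which is a useful consistency check.
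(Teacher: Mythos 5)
Your proof is correct and follows essentially the same route as the paper: both start from Eq.~\eqref{thm:wangetal}, use $L_G^\#{\sf 1}={\sf 0}$ to reduce $\k^{\sf T}L_G^\#\k$ to a quadratic form in $\w$, and apply $L_G^\#\w=\gamma^\#\w$ together with $\|\w\|^2=\|\k\|^2-{\rm vol}(G)^2/n$. Your explicit justification that $\w\perp{\sf 1}$ forces $\gamma\neq 0$ unless $\w={\sf 0}$ is a welcome bit of extra care that the paper leaves implicit, but it does not change the argument.
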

\begin{proof}
Let $\w$ be an eigenvector of $L_G$ associated with the eigenvalue $\gamma$. Then, 
   
    $L_G \w=\gamma \w$ and moreover
     $$L^{\#}_G \w=L^{\#}_G \k=\gamma^{\#} \Big(\k-\dfrac{{\rm vol}(G)}{n}{\sf 1}\Big)\Longrightarrow \k^\top L^{\#}_G \k=\gamma^{\#} \k^\top \k-\gamma^{\#}\dfrac{{\rm vol}(G)^2}{n}$$
  and the result follows from Eq. \eqref{thm:wangetal}.
\end{proof}
The next result provides a class of networks that are slightly regular and that include some well-known graph classes. Recall that a \emph{semiregular graph} is a graph that has only two distinct degrees but is not necessarily bipartite.

 \begin{proposition}
     Let $G=(V_0\cup V_1,E,c)$ a $(k_0,k_1)$-semiregular network satisfying also that for any $i\in V_0,$ $\sum\limits_{j\in V_0}c(i,j)=r_0$ and hence $\sum\limits_{j\in V_1}c(i,j)=k_0-r_0$; whereas for any $i\in V_1,$ $\sum\limits_{j\in V_0}c(i,j)=k_1-r_1$ and $\sum\limits_{j\in V_1}c(i,j)=r_1$. Therefore, $G$ is a slightly regular network with $$\gamma=\dfrac{n}{n_0}(k_1-r_1)=\dfrac{n}{n_1}(k_0-r_0).$$ In particular, $\gamma=n$ if and only if $k_0=r_0+n_1$ and hence $k_1=r_1+n_0$.
 \end{proposition}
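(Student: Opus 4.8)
The plan is to show directly that the vector $\w = \k - \frac{\mathrm{vol}(G)}{n}\2$ is an eigenvector of $L_G$ by computing $L_G\w$ entrywise and identifying the proportionality constant; the claimed formula for $\gamma$ then follows by reading off the eigenvalue. First I would record two preliminary facts. On one hand, since the total volume is $\mathrm{vol}(G) = n_0 k_0 + n_1 k_1$, a short computation gives
\begin{equation*}
w(x) = k_0 - \frac{n_0 k_0 + n_1 k_1}{n} = \frac{n_1(k_0 - k_1)}{n} \quad\text{for } x\in V_0,
\qquad
w(x) = \frac{n_0(k_1 - k_0)}{n} \quad\text{for } x\in V_1,
\end{equation*}
so $\w$ is constant on each of the two sides, say $\w = a\,\mathbf{1}_{V_0} + b\,\mathbf{1}_{V_1}$ with $a = \tfrac{n_1(k_0-k_1)}{n}$, $b = \tfrac{n_0(k_1-k_0)}{n}$; note $n_0 a + n_1 b = 0$, consistent with $\w\perp\2$. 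On the other hand, for each $i\in V_0$ we have $\sum_{j\in V_0} c(i,j) = r_0$ and $\sum_{j\in V_1} c(i,j) = k_0 - r_0$, and symmetrically for $i\in V_1$; these are exactly the row-sum data we need to evaluate $A\w$.

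The key step is to compute $(L_G\w)_i = k_i w(i) - (A\w)_i$ for $i$ in each side. For $i\in V_0$,
\begin{equation*}
(A\w)_i = \sum_{j\in V_0} c(i,j)\,a + \sum_{j\in V_1} c(i,j)\,b = r_0\, a + (k_0 - r_0)\, b,
\end{equation*}
so $(L_G\w)_i = k_0\,a - r_0\,a - (k_0 - r_0)\, b = (k_0 - r_0)(a - b)$. Likewise, for $i\in V_1$, $(A\w)_i = (k_1 - r_1)a + r_1 b$ and hence $(L_G\w)_i = k_1\,b - (k_1 - r_1)\,a - r_1\,b = (k_1 - r_1)(b - a)$. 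For $\w$ to be an eigenvector we need $(L_G\w)_i = \gamma\,w(i)$ on both sides, i.e. $(k_0-r_0)(a-b) = \gamma a$ and $(k_1-r_1)(b-a) = \gamma b$. Since $a - b = \tfrac{n_1(k_0-k_1)}{n} - \tfrac{n_0(k_1-k_0)}{n} = (k_0-k_1)\tfrac{n_0+n_1}{n} = k_0 - k_1$, and $a = \tfrac{n_1}{n}(k_0-k_1)$, $b = -\tfrac{n_0}{n}(k_0-k_1)$, both equations reduce (dividing by $k_0 - k_1 \neq 0$, the semiregular-but-not-regular case) to $\gamma = \tfrac{n}{n_1}(k_0 - r_0)$ and $\gamma = \tfrac{n}{n_0}(k_1 - r_1)$ respectively. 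It remains to check these two expressions agree: from $\mathrm{vol}(G) = n_0 k_0 + n_1 k_1$ counted edge-by-edge, the number of edges between $V_0$ and $V_1$ satisfies $n_0(k_0 - r_0) = n_1(k_1 - r_1)$ (each side counts the total weight of the cut), which gives exactly $\tfrac{n}{n_1}(k_0-r_0) = \tfrac{n}{n_0}(k_1-r_1)$. Hence $\w$ is an eigenvector with the stated $\gamma$, so $G$ is slightly regular; and plugging $k_0 = r_0 + n_1$ into $\gamma = \tfrac{n}{n_1}(k_0-r_0)$ gives $\gamma = n$, with the converse immediate, and then $n_0(k_0-r_0) = n_1(k_1-r_1)$ forces $k_1 = r_1 + n_0$.

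The main obstacle I anticipate is purely bookkeeping: keeping the two degrees $k_0,k_1$, the two internal-degree parameters $r_0,r_1$, and the two part sizes $n_0,n_1$ straight, and in particular verifying the cut-counting identity $n_0(k_0 - r_0) = n_1(k_1 - r_1)$ that makes the two formulas for $\gamma$ consistent — this is the one place where the hypothesis is genuinely used rather than just unwound. One should also handle the degenerate case $k_0 = k_1$ separately: there $\w = \mathbf 0$, $G$ is regular hence trivially slightly regular, and the stated value $\gamma = \tfrac{n}{n_1}(k_0 - r_0) = \tfrac{n}{n_0}(k_1 - r_1)$ is still well-defined (and the cut identity still holds), so no contradiction arises; strictly speaking any $\gamma$ works when $\w=\mathbf0$, but the displayed value remains a valid choice.
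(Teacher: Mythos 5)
Your proof is correct and follows essentially the same route as the paper: compute $\w$ explicitly as a two-valued vector on $V_0$ and $V_1$, evaluate $L_G\w$ using the row-sum hypotheses, and read off $\gamma$, with the identity $n_0(k_0-r_0)=n_1(k_1-r_1)$ reconciling the two expressions. Your version is just more detailed (entrywise bookkeeping, explicit cut-counting justification, and the degenerate case $k_0=k_1$), all of which the paper leaves implicit.
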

 \begin{proof}
  First we compute the orthogonal degree vector of $G$
  $$\w=\dfrac{(k_0-k_1)}{n}\Big(n_1\chi_{_{V_0}}-n_0\chi_{_{V_1}}\Big).$$
Then, $L_G\w=(k_0-k_1)(k_0-r_0)\chi_{_{V_0}}-(k_0-k_1)(k_1-r_1)\chi_{_{V_1}}.$ Therefore, $\w$ is an eigenvector for the eigenvalue $\gamma=\dfrac{n}{n_0}(k_1-r_1)=\dfrac{n}{n_1}(k_0-r_0).$ Observe that the above equality is true since $n_0(k_0-r_0)=n_1(k_1-r_1).$
 \end{proof}
 
Note that in the case of graphs, $\gamma=n$ iff $\gamma=\gamma_n,$ since in this case it is known that $\gamma_n\le n,$ see \cite{GM94}. Next we analyze some examples of slightly regular networks. Most of them are known families of graphs. 
\begin{example}{\rm 
\leavevmode
 \begin{itemize}
 \item[(a)] Regular graphs of order $k$.  We get that $\w={\sf 0}$ and hence we recover the known formula $K(G)=k{\sf tr}(L_G^\#).$
  \item[(b)] Complete bipartite  graphs $G=K_{p,q}$. In this case, $n_0=k_1=q$, $n_1=k_0=p$, $r_0=0=r_1$. Hence, 
  $\gamma_n =n= p+q$ is a simple eigenvalue and 
$$K(G)=n-1-\dfrac{2pq}{n^2}-\dfrac{(p-q)^2}{2n^2}=n-\dfrac{3}{2},$$
where we have take into account the expression for $L_G^\#$ obtained in \cite{AbCaEnJi23}. This expression for Kemeny's constant of bipartite complete graphs is known by considering the eigenvalues of the normalized Laplacian matrix.

 \end{itemize}
 
}\end{example}

\begin{example}{\rm 
 We consider now the particular case of a join network that was studied in \cite{BCEM12} obtained when the satellites are regular graphs. 
 For any $i=1,\ldots,m>1$, let $\Gamma_i=(V_i,E_i)$ a connected
$k$-regular graph of order $n_i$, $\Gamma_0=(V_0,E_0)$ a 
$\ell$-regular graph of order $n_0$ and  $V=\coprod\limits_{i=0}^mV_i$ the disjoint union of
all vertex sets. Let $n=\sum\limits_{i=0}^m n_i.$
The {\it join graph with base
$\Gamma_0$ and satellites $\Gamma_i$} is the graph  $\Gamma=(V,E)$ obtained by joining the
graphs $\Gamma_i$, $i=1,\ldots,m$ to $\Gamma_0$; that is, $E= \hat E\cup \bigcup\limits_{i=0}^m E_i,$ where 
$\hat E=\Big\{\{x,y\}: x\in V_0 \mbox{ and } y\in \coprod\limits_{i=1}^mV_i\Big\}.$ In this case, $V_1=\coprod\limits_{i=1}^mV_i$, $k_0=\ell+n-n_0$, $r_0=\ell$, $k_1=k+n_0$, $r_1=k$ and  $\gamma_n=n.$
 }
 
\end{example}

 A particular case of the above operation are the so-called \emph{windmill graphs}, denoted $W(m, k)$, which consist of $m$ copies of the complete graph $K_k$, with every node connected to a common node ($n_0=1$ and hence $\ell=0$). Two generalizations of these graphs,  suggested by Kooij \cite{K2019}, are the so-called generalized windmill graphs. For both generalizations, we replace the central node, connecting all $m$ copies of the complete graph on $k$ vertices $K_k$, by $n_0$ central nodes. For the first generalization, we assume that the $n_0$ central nodes are all connected, i.e. they form a clique $K_{n_0}$ and hence $\ell=n_0-1$. We
call this a \emph{generalized windmill graph of Type I} and denote it by $W'(m,k,n_0)$. Note that $W'(m,k,1)=W(m,k)$. For the second generalization, we assume that the $n_0$ central nodes have no connections among each other and hence $\ell=0$. We will refer to it as  \emph{generalized windmill graph of Type II} and denote it by $W''(m,k,n_0)$. 

Observe that in all the above examples $\gamma=\gamma_n$, but the path of order $4$ is an example where $\gamma$ is not the greatest eigenvalue since in this case $\gamma_4=2+\sqrt{2}.$

 Table \ref{tab:valores_grafos} shows the values of the lower and upper bounds from Eq. \eqref{cotas_degree} and $K(G)$ for several slightly regular graphs. We verify that the upper bound is always an equality, since, as we have previously showed, these are all slightly regular graphs with $\gamma_n=n.$

\begin{table}[h]
    \centering
     {\footnotesize{

    \begin{tabular}{lccccc}
        \toprule
        Graph ($G$)& $\gamma_2$ & $\gamma_n$ & Lower  bound \eqref{cotas_degree} & $K(G)$ & Upper bound \eqref{cotas_degree} \\
        \midrule
        Complete bipartite $K_{10,15}$ & 10 & 25 & 23.47 & 23.50 & 23.50 \\
        Windmill $W(3,10)$ & 1 & 31 & 44.32 & 45.45&45.45 \\
        $W'(3,10,5)$ & 5 & 35 & 34.49 & 35.49&35.49 \\
        $W''(3,10,5)$ & 5 & 35 & 35.21 &35.54 &35.54 \\
        \bottomrule
    \end{tabular} 
   \caption{Values of the lower and upper bounds from Eq. \eqref{cotas_degree} and $K(G)$ for several slightly regular graphs.}
    \label{tab:valores_grafos}
}}
\end{table}

\section{Approximating Kemeny's constant using spectral sparsification of graphs}\label{sec:sparsification}

In this section we will use Spielman’s sparsification method to efficiently estimate Kemeny’s constant in large networks, thus providing new competitive bounds and approximations of it. 

We start by revisiting known relationships between Kemeny’s constant and the effective resistance, and we also explore how heterogeneity and irregularity indices refine the known approximations for Kemeny's constant, since we will be comparing our results with these.

 The \emph{effective graph resistance}, also known as {\it Kirchhoff index} of the graph, is given by
$R_G=n\sum\limits_{i=2}^n\dfrac{1}{\gamma_i}.$
For a regular graph on $n$ vertices with degree $k$, the relation between Kemeny's constant and the effective graph resistance is well known \cite{PR2011}, 
\begin{equation}\label{KintermsR}
K(G)=\frac{k}{n}R_G.   
\end{equation}
Motivated by Eq. \eqref{KintermsR}, Kooij {\it et al.} \cite{KoDu20} proposed the following two approximations for Kemeny's constant for non-regular graphs.

Let $G$ be a graph with $n$ vertices and $m$ edges, and let $\overline{k}$ denote the average degree of the nodes, defined as $\overline{k}=\frac{2m}{n}=\frac{{\sf vol}(G)}{n}$. The heterogeneity index $H$, which  quantifies the variability in the degree distribution \cite{B1992}, is defined as:
$$H=\frac{1}{n}\sum_{i=1}^{n}(k_i-\overline{k})^2.$$
It is also observed that $H=\dfrac{||\w||^2}{n}.$ Based on this, Kooij {\it et al.} \cite{KoDu20} proposed the first approximation for Kemeny’s constant as:
\begin{equation}\label{eq:K*}
K^{\ast}(G)=\frac{\overline{k}}{n}R_G+H \cdot f(n,m)
\end{equation}
where $f(n,m)$ is a function that remains to be fully determined. For regular graphs with degree $k$, Eq. \eqref{eq:K*} simplifies to
Eq. \eqref{KintermsR}, as in this case $\overline{k} =k$ and $H=0$.

An alternative metric to the heterogeneity index $H$ is a variant of the irregularity index \cite{CS1957}, defined as $I=\beta^2_1-\overline{k}^2,$
where $\beta_1$ denotes the largest eigenvalue of the adjacency matrix. Using this metric, Kooij {\it et al.} \cite{KoDu20} presented a second approximation of Kemeny's constant:
\begin{equation}\label{eq:K**}
   K^{\ast\ast}(G)= \frac{\bar k}{n}R_G+ I\frac{1-2n}{2m}.
\end{equation}

Next, we present a novel application of Spielman’s sparsification method to approximate Kemeny’s constant. This illustrates how spectral graph sparsification can provide efficient and accurate estimates for Kemeny’s constant in large and complex networks.

Before we can state our main results of this section, we need some preliminary definitions and results. We follow the notation from \cite{S2017}. 

For $\epsilon > 0$, we say that a  network $G'$ is an $\epsilon$-\emph{approximation
 of a network} $G$ if they have the same vertex set and for all ${\sf z}
\in \mathbb{R}^n$ it holds that
\begin{equation} \label{dispersa}
\frac{1}{1+\epsilon}\z^\top L_{G}\z \leq \z^\top L_{G'}\z \leq(1+\epsilon) \z^\top L_{G}\z.
\end{equation}

For small $\epsilon$, this condition is particularly strong, as it implies that $L_G$ and $L_{G'}$ have approximately the same eigenvalues. Similarly, it follows that the effective resistance between every pair of vertices is approximately the same in $G$ and ${G'}$. 
The main tool we use in this section is the following result by Spielman.

\begin{lemma}[\cite{S2017}] \label{lema:spielman}
If $G'$ is an $\epsilon$-approximation of $G$, then for all $z \in \mathbb{R}^n$, it holds
\begin{equation*} \label{dispersathm}
\frac{1}{1+\epsilon}\z^\top L^\#_{G}\z \leq \z^\top L^\#_{G'}\z \leq(1+\epsilon) \z^\top L^\#_{G}\z.
\end{equation*}
\end{lemma}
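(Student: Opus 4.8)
The plan is to reduce the statement to an inequality between matrix inverses in the Loewner order, exploiting that $L_G$ and $L_{G'}$ are positive semidefinite with the \emph{same} null space. First I would observe that \eqref{dispersa} forces $\ker L_{G'}=\ker L_G$: for a positive semidefinite matrix $M$ one has $\z^{\top}M\z=0$ iff $M\z=0$, so the two-sided bound gives $L_G\z=0\iff L_{G'}\z=0$. As $G$ (and hence, by this observation, also $G'$) is connected, this common kernel is $\mathrm{span}\{\1\}$; write $W=\1^{\perp}$ and let $P$ denote the orthogonal projector onto $W$.

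Next I would restate \eqref{dispersa} as the pair of Loewner inequalities $\tfrac{1}{1+\epsilon}L_G\preceq L_{G'}\preceq(1+\epsilon)L_G$ and regularize by adding $\tfrac1n\1\1^{\top}\succeq 0$. Put $M_G=L_G+\tfrac1n\1\1^{\top}$ and $M_{G'}=L_{G'}+\tfrac1n\1\1^{\top}$; both are positive definite, and since $\1$ is a null vector of both Laplacians one checks that $M_G^{-1}=L_G^{\#}+\tfrac1n\1\1^{\top}$ (and likewise for $G'$) by evaluating on the eigenvectors of $L_G$. Because $\tfrac{\epsilon}{1+\epsilon}\cdot\tfrac1n\1\1^{\top}\succeq 0$, the inequalities pass to $M_G,M_{G'}$, giving $\tfrac1{1+\epsilon}M_G\preceq M_{G'}\preceq(1+\epsilon)M_G$. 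Now I would invoke the operator antitonicity of matrix inversion on positive definite matrices---if $0\prec A\preceq B$ then $B^{-1}\preceq A^{-1}$, proved by conjugating with $B^{-1/2}$ and reading off eigenvalues---to obtain $\tfrac1{1+\epsilon}M_G^{-1}\preceq M_{G'}^{-1}\preceq(1+\epsilon)M_G^{-1}$.

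Finally I would transfer this back to the group inverses. For $\z\in W$ the rank-one term contributes nothing, so $\z^{\top}M_G^{-1}\z=\z^{\top}L_G^{\#}\z$ and $\z^{\top}M_{G'}^{-1}\z=\z^{\top}L_{G'}^{\#}\z$. For arbitrary $\z\in\mathbb{R}^{n}$, since $L_G^{\#}\1=L_{G'}^{\#}\1=0$ we have $\z^{\top}L_G^{\#}\z=(P\z)^{\top}L_G^{\#}(P\z)$ and similarly for $G'$; applying the $M$-inequalities to the vector $P\z\in W$ then yields precisely $\tfrac1{1+\epsilon}\z^{\top}L_G^{\#}\z\le\z^{\top}L_{G'}^{\#}\z\le(1+\epsilon)\z^{\top}L_G^{\#}\z$, as required.

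The step I expect to be the crux is the passage from the semidefinite matrices $L_G,L_{G'}$ to the definite matrices $M_G,M_{G'}$: it is exactly the common-kernel observation together with the shift by $\tfrac1n\1\1^{\top}$ that allows one to replace the group inverse by a genuine inverse, after which the standard operator-antitone inequality for inversion applies verbatim; everything else is routine bookkeeping on $\1^{\perp}$.
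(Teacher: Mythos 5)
The paper does not actually prove this lemma---it is quoted verbatim from Spielman's survey \cite{S2017}---so there is no internal proof to compare against. Your argument is correct and is essentially the standard derivation underlying the cited result: the sandwich on the quadratic forms forces $\ker L_{G'}=\ker L_G=\mathrm{span}\{\1\}$, the shift by $\frac{1}{n}\1\1^{\top}$ converts the group inverses into genuine inverses via the identity $(L+\frac{1}{n}\1\1^{\top})^{-1}=L^{\#}+\frac{1}{n}\1\1^{\top}$, and operator antitonicity of inversion on positive definite matrices transfers the Loewner bounds; restricting to $\1^{\perp}$ recovers the claimed inequalities for $L^{\#}$. All steps check out, including the observation that the extra rank-one terms only help (they differ by the positive semidefinite matrix $\frac{\epsilon}{(1+\epsilon)n}\1\1^{\top}$), so nothing further is needed.
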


\medskip

To compute a sparse approximation $G'=(V, E',c')$ of a network $G=(V,E,c)$ such that $E'\subseteq E$, we implement the following algorithm from \cite{BSST2013}.

 {\footnotesize{	\begin{algorithm}\label{spar}
     {\footnotesize{	\caption{\cite{BSST2013} $G'= \Sparsify (G,\epsilon)$.}
	\label{sparsify}
	\begin{algorithmic}		
		\State{Set $t=8n\cdot \log(n)/\epsilon^2$, $c'=0$, and $E'=\emptyset$.}		
		\For{each edge $\{i,j\}\in E$}		
		\State{Assign to edge $\{i,j\}$ a probability $p_{ij}$ proportional to $c_{ij}r_{ij}$.}		
		\EndFor	
		\State{Take $t$ samples independently with replacement from $E$, and each time the edge 
		$\{i,j\}$ is sampled, increase the value of $c'_{ij}$ by $c_{ij}/tp_{ij}$.}
	\end{algorithmic}}}
\end{algorithm}}}


\begin{theorem}\cite{BSST2013}\label{prob}
	Let $G$ be a network, let $\epsilon \in \mathbb{R}$, $0 <
	\epsilon \leq 1$ and let $G'= \Sparsify (G,\epsilon)$. Then, $G'$ is an $\epsilon$-approximation of $G$ with
	probability at least $1/2$.
\end{theorem}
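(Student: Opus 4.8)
The plan is to recast the statement as a matrix concentration estimate in the subspace $\{{\sf 1}\}^\perp$, where $L_G$ is invertible. Write $L_G=\sum_{\{i,j\}\in E}c_{ij}L_{ij}$ with $L_{ij}=({\sf e}^i-{\sf e}^j)({\sf e}^i-{\sf e}^j)^\top$, let $L_G^{+/2}$ denote the positive semidefinite square root of $L_G^\#$, and set $\Pi=L_G^{+/2}L_GL_G^{+/2}$, the orthogonal projector onto the range of $L_G$. For each edge $\{i,j\}$ put $v_{ij}=\sqrt{c_{ij}}\,L_G^{+/2}({\sf e}^i-{\sf e}^j)$, so that $\sum_{\{i,j\}\in E}v_{ij}v_{ij}^\top=\Pi$ and $\|v_{ij}\|^2=c_{ij}r_{ij}$. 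Summing this last identity and using $\sum_{\{i,j\}}c_{ij}L_{ij}=L_G$ gives Foster's identity $\sum_{\{i,j\}\in E}c_{ij}r_{ij}={\rm tr}(\Pi)=n-1$; hence the probabilities produced by Algorithm~\ref{sparsify} are exactly $p_{ij}=c_{ij}r_{ij}/(n-1)$.

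Next I would analyse a single draw. If the $k$-th sample selects the edge $\{i,j\}$ (an event of probability $p_{ij}$), put $X_k=\frac{c_{ij}}{t\,p_{ij}}L_{ij}$ and $Y_k=L_G^{+/2}X_kL_G^{+/2}=\frac{1}{t\,p_{ij}}v_{ij}v_{ij}^\top$. Then $L_{G'}=\sum_{k=1}^{t}X_k$, the $Y_k$ are i.i.d.\ and positive semidefinite, $\mathbb{E}[Y_k]=\tfrac1t\Pi$ so that $\mathbb{E}\big[\sum_{k}Y_k\big]=\Pi$, and, because $\|v_{ij}\|^2/(t\,p_{ij})=(n-1)/t$ regardless of which edge is drawn, we obtain the deterministic bound $\|Y_k\|\le n/t$.

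I would then apply a matrix Chernoff inequality of Ahlswede--Winter / Rudelson / Tropp type to $S=\sum_{k=1}^{t}Y_k$, read inside the $(n-1)$-dimensional range of $L_G$: the summands are i.i.d., bounded in operator norm by $R=n/t$, and $\mathbb{E}[S]=\Pi$, whose nonzero eigenvalues all equal $1$. This yields that the probability that some eigenvalue of $S$ leaves the interval $\big[\tfrac1{1+\epsilon},\,1+\epsilon\big]$ is at most $2n\exp\!\big(-c\,\epsilon^2 t/n\big)$ for an absolute constant $c$, and the choice $t=8n\log n/\epsilon^2$ with $0<\epsilon\le1$ makes this at most $1/2$. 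On the complementary event, $\tfrac1{1+\epsilon}\Pi\preceq S\preceq(1+\epsilon)\Pi$; testing this inequality against $\y=L_G^{1/2}\z$ and using $L_G^{+/2}\y=\Pi\z$ together with $L_G{\sf 1}=L_{G'}{\sf 1}={\sf 0}$ turns it into $\tfrac1{1+\epsilon}\z^\top L_G\z\le\z^\top L_{G'}\z\le(1+\epsilon)\z^\top L_G\z$ for every $\z\in\mathbb{R}^n$, which is exactly \eqref{dispersa}.

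The technical core, and the main obstacle, is the matrix Chernoff step: pinning down the constants so that the stated $t=8n\log n/\epsilon^2$ really delivers failure probability $\le1/2$ (one may have to replace $\epsilon$ by $\epsilon/(1+\epsilon)$ in the lower tail, which is harmless since $\epsilon\le1$), and being careful that every spectral estimate is interpreted inside the range of $L_G$ --- the ${\sf 1}$-direction needs no attention because both Laplacians annihilate it. The remaining ingredients (Foster's identity, the expectation computation, the deterministic norm bound $(n-1)/t$, and undoing the normalization) are routine.
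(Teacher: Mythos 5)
The paper does not prove this statement at all: Theorem~\ref{prob} is imported verbatim from \cite{BSST2013}, so there is no internal argument to compare yours against. What you have written is a faithful reconstruction of the standard Spielman--Srivastava proof from the cited sources \cite{SpSr2011,BSST2013}: the reduction to the projector $\Pi=L_G^{+/2}L_GL_G^{+/2}$, Foster's identity $\sum_{\{i,j\}\in E}c_{ij}r_{ij}=n-1$ identifying the sampling probabilities of Algorithm~\ref{sparsify}, the i.i.d.\ decomposition $S=\sum_k Y_k$ with $\mathbb{E}[S]=\Pi$ and the deterministic bound $\|Y_k\|\le (n-1)/t$, and the de-normalization via $\y=L_G^{1/2}\z$ (which correctly uses that both Laplacians annihilate $\1$) to recover \eqref{dispersa}. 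All of these steps check out.

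The one genuine gap is the one you name yourself: the matrix concentration step is asserted, not proved, and the specific claim that $t=8n\log n/\epsilon^2$ forces failure probability at most $1/2$ depends on the constant in whichever Chernoff-type inequality you invoke. With the crude bounds $e^{\delta}/(1+\delta)^{1+\delta}\le e^{-\delta^2/3}$ and $e^{-\delta}/(1-\delta)^{1-\delta}\le e^{-\delta^2/2}$, the lower tail (with $\delta=\epsilon/(1+\epsilon)$, which can be as small as $\epsilon/2$) only yields a bound of order $(n-1)/n$, which does not close the argument; the sources instead rely on the Rudelson--Vershynin concentration lemma, whose constants are tuned to this sample count. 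Since the theorem is a quoted black box in this paper, this is not a defect relative to the paper, but if you intend your sketch as a self-contained proof you must either cite a concentration inequality with explicit constants that certify $t=8n\log n/\epsilon^2$, or accept a larger absolute constant in $t$.
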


Note that the edges that are removed in  Algorithm \ref{sparsify} are the ones that are never sampled, and that the number of edges on the sparsified network $G'$ is at most the minimum between $8n\cdot \log(n)/\epsilon^2$ and $|E|$.

In \cite{SpSr2011}, it was also shown that for the normalized Laplacian eigenvalues of $G'$, $0=\mu'_1 < \cdots \leq \mu'_n\le 2,$ it is satisfied that 
\begin{equation}\label{normalSparse}
\dfrac{\mu_i}{1+\varepsilon}\le \mu'_i\le (1+\varepsilon)\mu_i, \hspace{.25cm} i=1,\ldots,n. 
\end{equation}

\subsection{Three approximations of Kemeny's constant }\label{sec:sparsificationbackground}

Let $G'$ be an $\epsilon$-approximation of $G$. We denote  the diagonal elements of $L_G^\# D$ and $L_{G'}^{\#} D$ as 
\begin{equation*}
x_i=k_i{\sf e}_i^T L_G^\#{\sf e}_i,    \, \, \,    \, \, \,    \, \, \,    \, \, \,   \, \, \, x_i'=k_i{\sf e}_i^T L_{G'}^\#{\sf e}_i.
\end{equation*}

In addition, we also consider the non-negative real numbers

\begin{equation*}
y=\dfrac{1}{\rm{vol}(G)}{\k}^{\sf T}L_G^{\#}{\k},    \, \, \,    \, \, \,    \, \, \,    \, \, \,   \, \, \, y'=\dfrac{1}{\rm{vol}(G)}{\k}^{\sf T}L_{G'}^{\#}{\k}.
\end{equation*}

Then, from \eqref{thm:wangetal}, we obtain 
$K(G)=\displaystyle\sum\limits_{i=1}^nx_i-y.$ This identity suggests defining the following two approximations of $K(G)$:

\begin{align*}
K'(G)&=\sum_{i=1}^nx_i'-y' \, \, \,    \, \, \,    \, \, \, \text{and}   \, \, \,    \, \, \,    \, \, \, 
K''(G)=\sum_{i=1}^nx_i'-y.
\end{align*}

Additionally, we consider $K(G')$ as a third approximation of $K(G)$, and we denote it as
$K'''=K(G').$ Indeed, observe that from  \eqref{normalSparse} we can derive the following result.

\begin{proposition}\label{propo:direct}
    For $\epsilon > 0$, let $G'$ be an $\epsilon$-approximation of a connected network $G$. Then,
    $$\dfrac{1}{(1+\varepsilon)} K(G')\leq K(G)\leq (1+\epsilon)K(G').$$
    
\end{proposition}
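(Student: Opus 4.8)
The plan is to deduce Proposition~\ref{propo:direct} directly from the eigenvalue comparison \eqref{normalSparse} together with the eigenvalue formula \eqref{Lovasz} for Kemeny's constant. Recall that $K(G)=\sum_{j=2}^n \mu_j^{-1}$ and $K(G')=\sum_{j=2}^n (\mu'_j)^{-1}$, where $0=\mu_1<\mu_2\le\cdots\le\mu_n$ and $0=\mu'_1<\mu'_2\le\cdots\le\mu'_n$ are the normalized Laplacian eigenvalues of $G$ and $G'$ respectively. Since $G$ is connected we have $\mu_j>0$ for $j\ge 2$, and \eqref{normalSparse} guarantees $\mu'_j>0$ as well, so all reciprocals are well defined and positive.

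First I would invert the two-sided bound \eqref{normalSparse}. For each $j=2,\ldots,n$, from $\mu_j/(1+\epsilon)\le \mu'_j\le(1+\epsilon)\mu_j$ and positivity of all quantities, taking reciprocals (which reverses inequalities) yields
\begin{equation*}
\frac{1}{(1+\epsilon)\mu_j}\le \frac{1}{\mu'_j}\le \frac{1+\epsilon}{\mu_j}.
\end{equation*}
Next I would sum this chain over $j=2,\ldots,n$, using linearity of the sum and the fact that $1/(1+\epsilon)$ and $1+\epsilon$ are constants independent of $j$:
\begin{equation*}
\frac{1}{1+\epsilon}\sum_{j=2}^n\frac{1}{\mu_j}\le \sum_{j=2}^n\frac{1}{\mu'_j}\le (1+\epsilon)\sum_{j=2}^n\frac{1}{\mu_j}.
\end{equation*}
By \eqref{Lovasz} the left-hand and right-hand sums equal $K(G)$ and the middle sum equals $K(G')$, so $\tfrac{1}{1+\epsilon}K(G)\le K(G')\le(1+\epsilon)K(G)$. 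Finally I would rearrange: the right inequality $K(G')\le(1+\epsilon)K(G)$ gives $\tfrac{1}{1+\epsilon}K(G')\le K(G)$, and the left inequality $\tfrac{1}{1+\epsilon}K(G)\le K(G')$ gives $K(G)\le(1+\epsilon)K(G')$, which together are exactly the claimed sandwich.

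There is essentially no hard part here; the proposition is a short corollary. The only point requiring a word of care is justifying that the reciprocals make sense, i.e. that sparsification preserves connectedness in the relevant sense --- this is handled by observing that \eqref{normalSparse} forces $\mu'_2\ge \mu_2/(1+\epsilon)>0$, so $G'$ has algebraic connectivity bounded away from zero and \eqref{Lovasz} applies to $G'$. One could alternatively route the proof through Lemma~\ref{lema:spielman} and the trace identity $K(G)=\mathrm{tr}(\mathcal{L}_G^\#)$ by plugging standard basis vectors into the quadratic-form bound and summing, but the direct eigenvalue argument via \eqref{normalSparse} is cleaner and is what I would write.
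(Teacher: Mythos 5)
Your proposal is correct and follows exactly the route the paper intends: the paper gives no written proof but states that the proposition follows from the eigenvalue comparison \eqref{normalSparse}, and your argument --- inverting those inequalities, summing via \eqref{Lovasz}, and rearranging --- is precisely that derivation, with the added (welcome) care about positivity of the $\mu'_j$.
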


Note that for regular graphs, $y$ and $y'$ are zero, since the all-ones vector is an eigenvector of $L_{G'}^{\#}$, and thus $K'$ and $K''$ are the same.

The computational cost of $K'(G)$ is equivalent to solving $n+1$ linear equations in a sparse matrix. The computational cost of $K''(G)$ is equivalent to solving $n$ linear equations in a sparse matrix plus one in the original matrix. Indeed, note that for obtaining $K''(G)$ it is not needed to compute the group inverse; the problem boils down to solve a linear system in the original Laplacian, that is, one needs to compute $L_{G}^{\#}{\k}$. When it is clear from the context, we will use the notation $K,K',K''$ for $K(G),K'(G),K''(G)$, respectively.

\begin{theorem}\label{thm:K'K''} For $\epsilon > 0$, let $G'$ be an $\epsilon$-approximation of a connected network $G$, and let $K'$ and $K''$ be the above approximations of Kemeny's constant $K$. Then,\leavevmode
\begin{description}
    \item[$(i)$]   $K- \frac{\epsilon}{1+\epsilon}(K+y) \leq K'' \leq K +\epsilon(K+y).$ 
    \item[$(ii)$] $K- \frac{\epsilon}{1+\epsilon}\left(K+(2+\epsilon)y\right) \leq K' \leq K +\epsilon\left(K+\frac{2+\epsilon}{1+\epsilon}y\right).$
\end{description}
\end{theorem}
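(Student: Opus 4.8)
The plan is to control the two pieces $\sum_i x_i'$ and $y'$ separately, using Lemma~\ref{lema:spielman} applied to the right test vectors, and then combine them via the identities $K = \sum_i x_i - y$, $K'' = \sum_i x_i' - y$ and $K' = \sum_i x_i' - y'$.

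First I would bound $\sum_i x_i'$. Since $x_i' = k_i\,{\sf e}_i^\top L_{G'}^\# {\sf e}_i$ and $k_i>0$, applying Lemma~\ref{lema:spielman} with $\z = {\sf e}_i$ gives $\tfrac{1}{1+\epsilon} x_i \le x_i' \le (1+\epsilon) x_i$ for each $i$, and summing over $i$ yields $\tfrac{1}{1+\epsilon}\sum_i x_i \le \sum_i x_i' \le (1+\epsilon)\sum_i x_i$. Writing $\sum_i x_i = K + y$ (from \eqref{thm:wangetal}), this is $\tfrac{1}{1+\epsilon}(K+y) \le \sum_i x_i' \le (1+\epsilon)(K+y)$, equivalently $-\tfrac{\epsilon}{1+\epsilon}(K+y) \le \sum_i x_i' - (K+y) \le \epsilon(K+y)$. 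Subtracting $y$ from all three parts of $\tfrac{1}{1+\epsilon}(K+y) \le \sum_i x_i' \le (1+\epsilon)(K+y)$ immediately gives part~$(i)$: $K - \tfrac{\epsilon}{1+\epsilon}(K+y) \le K'' \le K + \epsilon(K+y)$.

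For part~$(ii)$ I also need to compare $y'$ with $y$. Here I apply Lemma~\ref{lema:spielman} with $\z = \k$ (and divide by the common factor ${\rm vol}(G)>0$), obtaining $\tfrac{1}{1+\epsilon} y \le y' \le (1+\epsilon) y$. Now $K' = \sum_i x_i' - y' = K'' - (y' - y)$, so I combine the bound from part~$(i)$ on $K''$ with $-\epsilon y \le -(y'-y) \le \tfrac{\epsilon}{1+\epsilon} y$ (which comes from $\tfrac{1}{1+\epsilon} y \le y' \le (1+\epsilon) y$). Adding the upper bounds: $K' \le K + \epsilon(K+y) + \tfrac{\epsilon}{1+\epsilon} y = K + \epsilon K + \epsilon y + \tfrac{\epsilon}{1+\epsilon} y = K + \epsilon\big(K + \tfrac{2+\epsilon}{1+\epsilon} y\big)$, since $\epsilon + \tfrac{\epsilon}{1+\epsilon} = \tfrac{\epsilon(2+\epsilon)}{1+\epsilon}$. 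Adding the lower bounds: $K' \ge K - \tfrac{\epsilon}{1+\epsilon}(K+y) - \epsilon y = K - \tfrac{\epsilon}{1+\epsilon}K - \big(\tfrac{\epsilon}{1+\epsilon} + \epsilon\big) y = K - \tfrac{\epsilon}{1+\epsilon}\big(K + (2+\epsilon) y\big)$, using $\tfrac{\epsilon}{1+\epsilon} + \epsilon = \tfrac{\epsilon(2+\epsilon)}{1+\epsilon}$ again. This is exactly part~$(ii)$.

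The argument is essentially a bookkeeping exercise once the two applications of Lemma~\ref{lema:spielman} are in place, so there is no serious obstacle; the only point requiring a little care is keeping the direction of the inequalities straight when passing from the two-sided bounds on $\sum_i x_i'$ and $y'$ to the two-sided bounds on the differences $K'' - K$ and $K' - K$, and verifying the elementary algebraic identities $\epsilon + \tfrac{\epsilon}{1+\epsilon} = \tfrac{\epsilon(2+\epsilon)}{1+\epsilon}$ that produce the stated constants. One should also note explicitly that $y, y' \ge 0$ (stated in the text) and $K + y = \sum_i x_i \ge 0$, so that multiplying the inequalities through by these quantities preserves their direction.
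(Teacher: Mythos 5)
Your proof is correct and follows essentially the same route as the paper: both parts rest on applying Lemma~\ref{lema:spielman} to $\z={\sf e}_i$ (giving $\tfrac{1}{1+\epsilon}x_i\le x_i'\le(1+\epsilon)x_i$) and to $\z=\k$ (giving $\tfrac{1}{1+\epsilon}y\le y'\le(1+\epsilon)y$), and then the identities $K=\sum_i x_i-y$, $K''=\sum_i x_i'-y$, $K'=\sum_i x_i'-y'$. Routing part $(ii)$ through $K'=K''-(y'-y)$ rather than bounding $\sum_i x_i'-y'$ directly, as the paper does, is only a cosmetic reorganization of the same algebra.
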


\begin{proof}\leavevmode (i) $G'$ is $\epsilon$-approximation of $G$, so by Lemma \ref{lema:spielman}, for all $i=1,\dots,n$, we have
$$
\frac{1}{1+\epsilon}x_i \leq x_i' \leq(1+\epsilon) x_i.
$$

Then, on one hand, 
$$
K''=\sum_{i=1}^nx_i'-y  \leq(1+\epsilon)\sum_{i=1}^nx_i-y = K +\epsilon\sum_{i=1}^nx_i=K +\epsilon(K+y).
$$

On the other hand, 
\begin{align*}
K'' &\geq\frac{1}{1+\epsilon}\sum_{i=1}^nx_i-y = K -\frac{\epsilon}{1+\epsilon}\sum_{i=1}^nx_i=K -\frac{\epsilon}{1+\epsilon}(K+y).
\end{align*}

(ii) $G'$ is $\epsilon$-approximation of $G$, so by Lemma \ref{lema:spielman}, for all $i=1,\dots,n$, we have
$$
-(1+\epsilon)y \leq -y' \leq -\frac{1}{1+\epsilon}y.
$$

Then, on one hand, 
\begin{align*}
K'&=\sum_{i=1}^nx_i'-y'  
\leq(1+\epsilon)\sum_{i=1}^nx_i-\frac{1}{1+\epsilon}y \\
&= K +\epsilon\left(\sum_{i=1}^nx_i + \frac{1}{1+\epsilon}y \right) 
=K +\epsilon\left(K+\frac{2+\epsilon}{1+\epsilon}y\right).
\end{align*}

On the other hand, 
\begin{align*}
K' &\geq\frac{1}{1+\epsilon}\sum_{i=1}^nx_i-(1+\epsilon)y 
= K -\epsilon\left(\frac{1}{1+\epsilon}\sum_{i=1}^nx_i + y \right)\\
&=
K -\epsilon\left(\frac{1}{1+\epsilon}(K+y) + y \right)
=K- \frac{\epsilon}{1+\epsilon}\left(K+(2+\epsilon)y\right).
\qedhere\end{align*}

\end{proof}

In the following result we prove an upper bound for  Kemeny's constant of the sparsified graph $G'$, which is tight in some cases.

\begin{proposition}\label{propo:upperboundK'''}
    For $\epsilon > 0$, let $G'$ be an $\epsilon$-approximation of a connected network $G$. Then,
        $$K'''\leq (1+\epsilon)^2{\rm tr}(\LL_G^\# D)- \dfrac{M}{(1+\epsilon)^2{\rm vol}(G)\gamma_n},$$
where $M=\max\left\{ \dfrac{1}{(1+\epsilon)^2}||\k||^2-\dfrac{(1+\epsilon)^2}{n}{\rm vol}(G)^2 \, , \,\, 0\right\}.$
\end{proposition}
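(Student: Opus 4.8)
The plan is to apply the identity \eqref{thm:wangetal} to the sparsified network $G'$ and then replace every quantity attached to $G'$ by a bound involving only $G$. Write $\k'$ and $D'$ for the degree vector and the diagonal degree matrix of $G'$, and set $\w'=\k'-\frac{{\rm vol}(G')}{n}{\sf 1}$, which is orthogonal to ${\sf 1}$. Since $L_{G'}^\#{\sf 1}={\sf 0}$ we have $\k'^{\sf T}L_{G'}^\#\k'=(\w')^{\sf T}L_{G'}^\#\w'$, so \eqref{thm:wangetal} together with the Rayleigh estimate $(\w')^{\sf T}L_{G'}^\#\w'\ge \|\w'\|^2/\gamma_n'$ (here $\gamma_n'$ is the largest eigenvalue of $L_{G'}$, so $1/\gamma_n'$ is the smallest nonzero eigenvalue of $L_{G'}^\#$, and $\w'$ is orthogonal to $\ker L_{G'}^\#=\langle{\sf 1}\rangle$) yields
\[
K''' = K(G') \le {\rm tr}(L_{G'}^\# D') - \frac{\|\w'\|^2}{\gamma_n'\,{\rm vol}(G')},
\]
which is exactly the upper bound in \eqref{cotas_degree} applied to $G'$.

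For the first term I would write ${\rm tr}(L_{G'}^\# D')=\sum_{i=1}^n k_i'\,({\sf e}^i)^{\sf T}L_{G'}^\#{\sf e}^i$ with $k_i'=({\sf e}^i)^{\sf T}L_{G'}{\sf e}^i$, noting that all factors are nonnegative since $L_G^\#$ and $L_{G'}^\#$ are positive semidefinite. Taking $\z={\sf e}^i$ in \eqref{dispersa} gives $k_i'\le(1+\epsilon)k_i$, and Lemma \ref{lema:spielman} with $\z={\sf e}^i$ gives $({\sf e}^i)^{\sf T}L_{G'}^\#{\sf e}^i\le(1+\epsilon)\,({\sf e}^i)^{\sf T}L_G^\#{\sf e}^i$; multiplying these two inequalities termwise and summing over $i$ gives ${\rm tr}(L_{G'}^\# D')\le(1+\epsilon)^2{\rm tr}(L_G^\# D)$.

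For the subtracted term I would bound each of its three factors in the direction that makes the term small. From \eqref{dispersa}, $\gamma_n'=\max_{\|\z\|=1}\z^{\sf T}L_{G'}\z\le(1+\epsilon)\,\gamma_n$; summing $k_i'\le(1+\epsilon)k_i$ over $i$ gives ${\rm vol}(G')\le(1+\epsilon)\,{\rm vol}(G)$; and using $k_i'\ge\frac{1}{1+\epsilon}k_i$ together with the volume bound in the elementary identity $\|\w'\|^2=\|\k'\|^2-\frac{{\rm vol}(G')^2}{n}$ gives $\|\w'\|^2\ge\frac{1}{(1+\epsilon)^2}\|\k\|^2-\frac{(1+\epsilon)^2}{n}{\rm vol}(G)^2$. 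Since also $\|\w'\|^2\ge 0$ (indeed $\k'^{\sf T}L_{G'}^\#\k'\ge 0$), we conclude $\|\w'\|^2\ge M$, hence $\dfrac{\|\w'\|^2}{\gamma_n'\,{\rm vol}(G')}\ge\dfrac{M}{(1+\epsilon)^2\,{\rm vol}(G)\,\gamma_n}$. Combining this with the bound on ${\rm tr}(L_{G'}^\# D')$ from the previous paragraph gives the stated inequality.

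The only genuinely delicate point is the bookkeeping of the $(1+\epsilon)$ factors: the lower bound on $\|\w'\|^2$ needs the \emph{lower} degree bound $k_i'\ge k_i/(1+\epsilon)$ for the term $\|\k'\|^2$ but the \emph{upper} volume bound ${\rm vol}(G')\le(1+\epsilon){\rm vol}(G)$ for the term ${\rm vol}(G')^2/n$, and one must keep in mind that in general ${\rm vol}(G)\neq{\rm vol}(G')$, so these cannot be identified. Taking the maximum with $0$ in the definition of $M$ is precisely what handles the regime where the resulting estimate for $\|\w'\|^2$ would be negative and hence vacuous. Everything else reduces to Rayleigh quotients and Lemma \ref{lema:spielman}.
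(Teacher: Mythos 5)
Your proof is correct and follows essentially the same route as the paper's: apply Eq.~\eqref{thm:wangetal} to $G'$, bound ${\rm tr}(L_{G'}^\# D')\le(1+\epsilon)^2{\rm tr}(L_G^\# D)$ by combining $k_i'\le(1+\epsilon)k_i$ with Lemma~\ref{lema:spielman}, and bound the subtracted quadratic form from below by $M/((1+\epsilon)^2{\rm vol}(G)\gamma_n)$ via the same estimate $||\w'||^2\ge M$. The only cosmetic difference is where one factor of $(1+\epsilon)$ is spent: you take the Rayleigh quotient with respect to $L_{G'}^\#$ and then use $\gamma_n'\le(1+\epsilon)\gamma_n$, whereas the paper first transfers the quadratic form from $L_{G'}^\#$ to $L_G^\#$ with Lemma~\ref{lema:spielman} and then applies Courant--Fischer with the eigenvalue $\gamma_n$ of $G$; both orderings yield the identical final bound.
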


\begin{proof}
We denote by $\k'=\big(k_1',\ldots,k_n'\big)^\top$ the degree vector of $G'$, and we denote $D'=\text{diag}(k_1',\ldots,k_n')$. As $G'$ is an $\epsilon$-approximation of $G$, $k_i'={\sf e}_i^{\sf T} L_{G'}{\sf e}_i$ and $k_i={\sf e}_i^{\sf T}L_G{\sf e}_i$, we have
\begin{align}\label{cotaki}
   \frac{1}{1+\epsilon}k_i \leq k_i' \leq (1+\epsilon)k_i
\end{align}
   for all $i=1,\dots,n$.
Since ${\rm vol}(G')=\displaystyle\sum\limits_{i\in V}k_i'$, we also get  
\begin{align}\label{cotavol} \frac{1}{1+\epsilon}{\rm vol}(G) \leq {\rm vol}(G') \leq (1+\epsilon){\rm vol}(G).\end{align}

   Now, on the one hand, 
   \begin{align}\label{cotatr_sup}
   	{\rm tr}(L_{G'}^\# D')&=\sum_{i=1}^n{k_i'{\sf e}_i^{\sf T} L_{G'}^\#{\sf e}_i}  
   	\leq(1+\epsilon)\sum_{i=1}^n{k_i{\sf e}_i^{\sf T}L_{G'}^\#{\sf e}_i} \notag \\ 
   	&\leq (1+\epsilon)^2\sum_{i=1}^n{k_i{\sf e}_i^{\sf T}L_{G}^\#{\sf e}_i}
   	=(1+\epsilon)^2{\rm tr}(L_G^\# D).
   \end{align}

  On the other hand, by (\ref{cotavol}) and
\begin{align*} \frac{1}{1+\epsilon}{\k'}^{\sf T}L_{G}^{\#}{\k'} \leq {\k'}^{\sf T}L_{G'}^{\#}{\k'} \leq (1+\epsilon){\k'}^{\sf T}L_{G}^{\#}{\k'},\end{align*}
we obtain
     \begin{align}\label{cotaintermedia}
  	\frac{1}{(1+\epsilon)^2}\dfrac{1}{{\rm vol}(G)}{\k'}^{\sf T}L_{G}^{\#}{\k'} \leq \dfrac{1}{{\rm vol}(G')}{\k'}^{\sf T}L_{G'}^{\#}{\k'}\leq \dfrac{(1+\epsilon)^2}{{\rm vol}(G)}{\k'}^{\sf T}L_{G}^{\#}{\k'}.
  \end{align}
By the Courant-Fischer Theorem, we obtain    
\begin{align*}
  	{\k'}^{\sf T}\LL_{G}^{\#}{\k'}={\w'}^{\sf T}\LL_{G}^{\#}{\w'}\geq \dfrac{||\w'||^2}{\gamma_n}= \dfrac{1}{\gamma_n}\left(||\k||^2-\dfrac{\rm{vol}(G')^2}{n}\right).
  \end{align*}

By (\ref{cotaki}) and (\ref{cotavol}) we have that
\begin{align*}
||\w'||^2\geq \dfrac{1}{(1+\epsilon)^2}||\k||^2-\dfrac{(1+\epsilon)^2}{n}{\rm vol}(G)^2.
  \end{align*}

Therefore,
\begin{align}\label{cotainf}
{\k'}^{\sf T}\LL_{G}^{\#}{\k'}\geq \dfrac{1}{\gamma_n} \max\left\{ \dfrac{1}{(1+\epsilon)^2}||\k||^2-\dfrac{(1+\epsilon)^2}{n}{\rm vol}(G)^2 \,\, , \,\, 0\right\}=\dfrac{M}{\gamma_n}.
  \end{align}

From (\ref{cotaintermedia}) and (\ref{cotainf}), we obtain
     \begin{align}\label{cotasegundoter}
  	\frac{1}{(1+\epsilon)^2}\dfrac{1}{{\rm vol}(G)}\dfrac{M}{\gamma_n} \leq \dfrac{1}{{\rm vol}(G')}{\k'}^{\sf T}\LL_{G'}^{\#}{\k'}.
  \end{align}
Finally, by (\ref{cotatr_sup}) and (\ref{cotasegundoter}), we get that
     \begin{align*}
&K'''\leq (1+\epsilon)^2{\rm tr}(\LL_G^\# D)- \dfrac{M}{(1+\epsilon)^2{\rm vol}(G)\gamma_n}.
\qedhere\end{align*}
\end{proof}

The bound from Proposition \ref{propo:upperboundK'''} is tight for the slightly regular networks defined in the previous section in the sense that $K(G)$ is exactly the bound obtained for $K(G')$. Also, in the next section we will empirically observe that the Kemeny constant of the sparsified graph $G'$ is a very good approximation of the Kemeny constant of the original graph $G$, see $K'''$ in the tables from Section \ref{sec:comp}.

\subsection{Computational results}\label{sec:comp}

Several approximations and bounds for Kemeny's constant have been studied in the literature, see e.g. \cite{KoDu20,LHL2021,Wa17}. In the previous section we have continued this line of research by proposing some more approximations of Kemeny's constant using Lemma \ref{lema:spielman}. Here we investigate how our new approximations on the Kemeny constant, $K'''$, $K''$ and $K'$ (see Theorem \ref{thm:K'K''} and Proposition \ref{propo:upperboundK'''}) compare with the two existing approximations shown in \cite[Section 3.1 and Section 3.3]{KoDu20}, which are denoted by $K^{*}$ and $K^{**}$ (see Section \ref{sec:sparsification} for more details).

We denote the lower and upper bounds of the new approximations from Theorem \ref{thm:K'K''} as $K'_{min}$, $K''_{min}$, $K'_{max}$ and $K''_{max}$, respectively.

	\begin{table}[ht!]
       
	\centering	
 	 {\footnotesize{	\begin{tabular}{p{1.3cm} p{1.3cm} p{1.3cm} p{1.3cm} p{1.3cm} p{1.3cm} p{1.3cm} p{1.3cm}}
		$K$ & $K^*$ & $K^{**}$ & $|E|$ \\
		\hline
		$23.5$    & $23.5$    & $23.5$   & $150$ \\
		\hline
	\end{tabular} }}\caption{Kemeny's constant $K$ of $K_{10,15}$ and its two approximations $K^*$ and $K^{**}$ from \cite{KoDu20}.}
	\label{tabla_mu1existing}
\end{table}
\vspace{-.5cm}

 	\begin{table}[ht!]
	\centering
	   
   {\footnotesize{	  \begin{tabular}{p{0.7cm} p{0.9cm} p{0.9cm} p{0.9cm} p{0.9cm} p{0.9cm} p{0.9cm} p{0.9cm} p{0.9cm} p{0.9cm} p{0.9cm}}
		$\epsilon$ & $K'''$ & $K''$ & $K'$ & $|E|$  & $K''_{min}$ & $K''_{sup}$ & $K'_{min}$ & $K'_{sup}$  \\
		\hline
		$0.5$   & $23.66$ & $24.53$ & $24.53$ & $149$ & $15.99$ & $34.76$ & $15.99$ & $34.77$ \\
		$1$   & $24.33$ & $25.33$ & $25.33$ & $138$ & $12.24$ & $46.02$ & $12.23$ & $46.03$ \\
  	    $1.5$ & $25.04$ & $28.92$ & $28.92$ & $110$  & $9.99$ & $57.28$    & $9.98$    & $57.29$ \\
		$2$   & $26.27$   & $31.26$    & $31.26$   & $86$  & $8.82$   &  $68.54$   & $8.48$    & $68.55$ \\
		\hline
	\end{tabular} }}\caption{Approximations of  Kemeny's constant of  $K_{10,15}$ with its bounds for different values of $\epsilon$.}
	\label{tabla_mu1}
\end{table}

As shown in Tables \ref{tabla_mu1} , \ref{tabla_mu3}, \ref{tabla_mu4} and \ref{tabla_mu5}, while the upper and lower bounds for the Kemeny constant are not particularly tight, the values of \( K' \) and \( K'' \) do approach the actual constant. Furthermore, the approximation provided by the Kemeny constant of the sparsified graph consistently yields the closest results across all cases, despite the number of edges decreasing as $\epsilon$ increases.

	\begin{table}[htp!]
	\centering
 	 {\footnotesize{	\begin{tabular}{p{1.3cm} p{1.3cm} p{1.3cm} p{1.3cm} p{1.3cm} p{1.3cm} p{1.3cm} p{1.3cm}}
		$K$ & $K^*$ & $K^{**}$ & $|E|$ \\
		\hline
		$45.45$    & $45.45$    & $43.88$   & $165$ \\
		\hline
	\end{tabular}}}
    \caption{Kemeny constant $K$ for $W(3, 10)$ and its two approximations $K^*$ and $K^{**}$ from \cite{KoDu20}.}	\label{tabla_mu3existing}
\end{table}
\vspace{-.5cm}

	\begin{table}[htp!]
	\centering
 	 {\footnotesize{	\begin{tabular}{p{0.7cm} p{0.9cm} p{0.9cm} p{0.9cm} p{0.9cm} p{0.9cm} p{0.9cm} p{0.9cm} p{0.9cm} p{0.9cm} p{0.9cm}}
		$\epsilon$ & $K'''$ & $K''$ & $K'$ & $|E|$  & $K''_{min}$ & $K''_{sup}$ & $K'_{min}$ & $K'_{sup}$  \\
		\hline
		$0.5$    &  $45.83$ & $46.55$ & $46.55$ & $162$  & $30.62$ & $67.70$ & $30.62$ & $67.71$ \\
		$1$     & $45.70$ & $47.57$ & $47.57$ & $156$ & $23.21$ & $89.95$ & $23.20$ & $89.97$ \\
  	$1.5$      &  $46.19$ & $48.05$ & $48.05$ & $141$  & $18.76$ & $112.19$ & $18.75$ & $112.22$ \\
		$2$      & $54.60$ & $61.65$ & $61.63$ & $105$ & $15.79$ & $134.44$ & $15.78$ & $134.46$ \\
		\hline
	\end{tabular}}}\caption{Approximations of  Kemeny's constant of $W(3, 10)$ with its bounds for different values of $\epsilon$.}
\label{tabla_mu3}
\end{table}

\vspace{-.5cm}

	\begin{table}[ht!]
	\centering

 	 {\footnotesize{	\begin{tabular}{p{1.3cm} p{1.3cm} p{1.3cm} p{1.3cm} p{1.3cm} p{1.3cm} p{1.3cm} p{1.3cm}}
		$K$ & $K^*$ & $K^{**}$ & $|E|$ \\
		\hline
		$35.49$    & $33.77$    & $30.51$   & $295$ \\
		\hline
	\end{tabular}}}\caption{Kemeny's constant $K$ of $W'(3, 10, 5)$ and its two approximations $K^*$ and $K^{**}$ from \cite{KoDu20}.}
	\label{tabla_mu4existing}
\end{table}
\vspace{-.5cm}

	\begin{table}[ht!]
	\centering
	
 	 {\footnotesize{	\begin{tabular}{p{0.7cm} p{0.9cm} p{0.9cm} p{0.9cm} p{0.9cm} p{0.9cm} p{0.9cm} p{0.9cm} p{0.9cm} p{0.9cm} p{0.9cm}}
		$\epsilon$ & $K'''$ & $K''$ & $K'$ & $|E|$  & $K''_{min}$ & $K''_{sup}$ & $K'_{min}$ & $K'_{sup}$  \\
		\hline
		$0.5$      & $35.53$ & $35.13$ & $35.13$ & $294$  & $23.97$ & $52.78$ & $23.95$ & $52.81$ \\
		$1$      & $36.48$ & $37.03$ & $37.02$ & $264$  & $18.20$ & $70.07$ & $18.18$ & $70.11$ \\
  	$1.5$      & $37.56$ & $38.60$ & $38.59$ & $208$  & $14.75$ & $87.36$ & $14.73$ & $87.41$ \\
		$2$      & $40.61$ & $42.59$ & $42.57$ & $152$ & $12.44$ & $104.64$ & $12.42$ & $104.70$ \\
		\hline
	\end{tabular}}}\caption{Approximations of  Kemeny's constant of $W'(3, 10, 5)$ with its bounds for different values of $\epsilon$.}\label{tabla_mu4}
\end{table}

\vspace{-.5cm}

	\begin{table}[ht!]
	\centering
	
	\label{tabla_mu5existing}
  {\footnotesize{		\begin{tabular}{p{1.3cm} p{1.3cm} p{1.3cm} p{1.3cm} p{1.3cm} p{1.3cm} p{1.3cm} p{1.3cm}}
		$K$ & $K^*$ & $K^{**}$ & $|E|$ \\
		\hline
		$35.54$    & $34.67$    & $ 33.30$   & $285$ \\
		\hline
	\end{tabular}}}\caption{Kemeny's constant K of $W''(3, 10, 5)$ and its two approximations $K^*$ and $K^{**}$ from \cite{KoDu20}.}
\end{table}

	\begin{table}[htp!]
	\centering

 	 {\footnotesize{	\begin{tabular}{p{0.7cm} p{0.9cm} p{0.9cm} p{0.9cm} p{0.9cm} p{0.9cm} p{0.9cm} p{0.9cm} p{0.9cm} p{0.9cm} p{0.9cm}}
		$\epsilon$ & $K'''$ & $K''$ & $K'$ & $|E|$   & $K''_{min}$ & $K''_{sup}$ & $K'_{min}$ & $K'_{sup}$  \\
		\hline
		$0.5$      & $35.67$ & $36.05$ & $36.05$ & $282$ & $24.01$ & $52.83$ & $23.99$ & $52.85$ \\
		$1$      & $36.54$ & $37.81$ & $37.80$ & $255$  & $18.24$ & $70.13$ & $18.23$ & $70.16$ \\
  	$1.5$      & $37.30$ & $38.08$ & $38.07$ & $207$  & $14.78$ & $87.42$ & $14.77$ & $87.46$ \\
		$2$      & $39.07$ & $41.81$ & $41.81$ & $158$& $12.48$ & $104.72$ & $12.46$ & $104.76$ \\
		\hline
	\end{tabular}}}\caption{Approximations of  Kemeny's constant of $W''(3, 10, 5)$ with its  bounds for different values of $\epsilon$.}	\label{tabla_mu5}
\end{table}

Note that it is expected that $K'$ provides a slightly worse approximation,  since this case is considering information on $G$ and $G'$, while $K'''$ is the best approximation for the instances we tested because it only uses information of the $\epsilon$-approximation network.



We finish this section by presenting an analysis of Erd\H{o}s-R\'enyi graphs $G(n,p)$ for different values of $n=600, 500, 250$, where each edge is included independently with probability $p=0.9, 0.7, 0.5, 0.05$. Previous results on this type of graphs indicate that under certain hypotheses  the Kirchhoff index, the hitting time, and Kemeny's constant are all of order $n(1+o(n))$ with high probability, \cite{OtSt23,Sy2021}.

Tables \ref{G600-1}, \ref{G600-2}, \ref{G500-1}, \ref{G500-2}, \ref{G250-1}, and \ref{G250-2} provide a comparison between the exact values of Kemeny's constant ($K$) and its sparse approximation ($K'''$) using Spielman’s sparsification method for Erd\H{o}s-R\'enyi graphs. For large and highly dense graphs, the approximation given by the lower and upper bounds of the Laplacian eigenvalues in \eqref{cotas_degree} proves to be highly accurate. This accuracy is attributed to the near-completeness of these graphs, where eigenvalues cluster around $n$, the graph volume is substantial, and consequently, the vector $\w$ has a small magnitude, leading to tight bounds. Additionally, we observe that the hitting time is of order $O(n)$, since for regular graphs, the hitting time and Kemeny's constant coincide.

In each table, we compute the percentage variation and relative error for different values of edge probability $p$ and sparsification parameter $\epsilon$. Regarding the impact of sparsification on $K'''$, we note that for small values of $\epsilon$ (0.5), the approximated values ($K'''$) remain very close to the exact values ($K$), with a relative error below $0.003\%$ in all cases.

For dense Erd\"os-R\'enyi graphs, the $\epsilon$-approximations significantly reduce the number of edges while maintaining a Kemeny's constant that closely approximates the true value (in columns $|E|$ variation we display the \% variation on the edge cardinality). This highlights the effectiveness of using techniques of spectral sparsification of graphs in the context of Kemeny's constant.

Moreover, obtaining efficient approximations of $K(G)$ in this class of graphs is of great practical interest. These graphs admit sparse approximations with relatively few edges that resemble expander graphs, enabling a significant reduction in computational cost by replacing the original matrix with a much sparser one. This characteristic is particularly important when analyzing Kemeny's constant, as a well-structured sparse approximation can preserve key spectral properties while significantly improving computational efficiency for large-scale graphs.

\begin{table}[h]
    \centering
    {\footnotesize{	 \begin{tabular}{ccccccccc}
        \toprule
        $n$ & $p$ & $|E|$ & $\gamma_2$ & $ \gamma$ &  \text{Lower} \eqref{cotas_degree}&  $K$ &\text{Upper } \eqref{cotas_degree} & $K^{**}$ \\
        \midrule
        600 & 0.9 & 161690 & 513.07&  562.52 & 598.11 & 598.11&  598.11 & 597.83 \\
        600 & 0.7 & 125691 & 380.80& 459.08& 598.43& 598.43& 598.43 & 597.64 \\
        600 & 0.5 & 89818  & 252.62& 339.41& 599.00& 599.00& 599.00 & 598.00 \\
        600 & 0.05 & 8912  & 13.06& 49.00& 618.34& 618.37& 618.39 & 635.00 \\
        \bottomrule
    \end{tabular}}}
        \footnotesize
    \caption{Kemeny's constant and bounds of $G(600,p)$ with $p=0.9, 0.7, 0.5, 0.05$.}\label{G600-1}
\end{table}

\begin{table}[h]
\centering
 {\footnotesize{	\begin{tabular}{|c|c|c|c|c|c|c|}
\hline
\multicolumn{7}{|c|}{$G(600,p)$} \\
\hline
 $p$ & $\epsilon$ & $|E|$ & $K'''$ & $K$ & Relative Error & \% $|E|$ Variation \\
\hline
0.90 & 0.5 &  81979& 599.95 & 598.11 & 0.003 & 49.299 \\
0.90 & 1.0 &27430 &605.54 & 598.11 & 0.012 & 83.035 \\
0.90 & 1.5 & 12957&615.09 & 598.11 & 0.028 & 91.987 \\
0.90 & 2.0 & 7442 &629.93 & 598.11 & 0.053 & 95.397 \\
0.70 & 0.5 & 74207&600.25 & 598.43 & 0.003 & 40.961 \\
0.70 & 1.0 & 26638&605.90 & 598.43 & 0.012 & 78.813 \\
0.70 & 1.5 &12813 &615.52 & 598.43 & 0.029 & 89.797 \\
0.70 & 2.0 &7408 &629.97 & 598.43 & 0.053 & 94.106 \\
0.50 & 0.5 & 63061&600.82 & 599.00 & 0.003 & 29.798 \\
0.50 & 1.0 & 25315&606.36 & 599.00 & 0.012 & 71.812 \\
0.50 & 1.5 & 12506&616.06 & 599.00 & 0.028 & 86.072 \\
0.50 & 2.0 & 7336&630.14 & 599.00 & 0.052 & 91.830 \\
0.05 & 0.5 &8896 &620.25 & 618.37 & 0.003 & 0.180 \\
0.05 & 1.0 & 8253&626.20 & 618.37 & 0.013 & 7.404 \\
0.05 & 1.5 & 6525&635.89 & 618.37 & 0.028 & 26.775 \\
0.05 & 2.0 & 4834&651.49 & 618.37 & 0.054 & 45.740 \\
\hline
\end{tabular}}}
\caption{Sparse approximations for different values of $\epsilon$ on $G(600,p)$, $p=0.9, 0.7, 0.5,0.05$.}\label{G600-2}
\label{tab:final_comparison}
\end{table}

\begin{table}[h]
    \centering
   {\footnotesize{	  \begin{tabular}{ccccccccc}
        \toprule
        $n$ & $p$ & $|E|$ & $\gamma_2$ & $ \gamma$ &  \text{Lower} \eqref{cotas_degree}&  $K$ &\text{Upper } \eqref{cotas_degree} & $K^{**}$ \\
        \midrule
        500 & 0.9 & 112373 & 428.18 & 471.34 & 498.11 & 498.11& 498.11&497.84\\
        500 & 0.7 & 87303  & 313.29& 385.94& 498.43& 498.43& 498.43 & 497.62 \\
        500 & 0.5 & 62670  & 215.95& 292.17& 498.99& 498.99& 498.99 & 498.07 \\
        500 & 0.05 & 6226  & 12.48& 44.32& 518.51& 518.54& 518.56 & 532.59 \\
        \bottomrule
      \end{tabular}}}
        \footnotesize
    \caption{Kemeny's constant and bounds for $G(500,p)$ with $p=0.9, 0.7, 0.5, 0.05$.} \label{G500-1}
\end{table}

	\begin{table}[h]
\centering
 {\footnotesize{	\begin{tabular}{|c|c|c|c|c|c|c|}
\hline
\multicolumn{7}{|c|}{$G(500,p)$} \\
\hline
$p$ & $\varepsilon$ & $|E|$& $K'''$ & $K$ & Relative Error & \% $|E|$  Variation \\
\hline
0.90 & 0.5 &62251 &499.72 & 498.11 & 0.003 & 61.464 \\
0.90 & 1.0 & 21756&504.68 & 498.11 & 0.012 & 87.723 \\
0.90 & 1.5 &10439 &513.23 & 498.11 & 0.028 & 94.267 \\
0.90 & 2.0 &5993 &525.41 & 498.11 & 0.053 & 96.742 \\
0.70 & 0.5 & 56042&500.01 & 498.43 & 0.003 & 55.437 \\
0.70 & 1.0 & 21092&504.92 & 498.43 & 0.012 & 83.212 \\
0.70 & 1.5 &10222 &513.62 & 498.43 & 0.029 & 91.863 \\
0.70 & 2.0 &5950 &526.64 & 498.43 & 0.053 & 95.265 \\
0.50 & 0.5 & 46316&500.64 & 499.00 & 0.003 & 48.431 \\
0.50 & 1.0 &19733 &505.60 & 499.00 & 0.012 & 78.014 \\
0.50 & 1.5 & 9939&514.31 & 499.00 & 0.028 & 88.923 \\
0.50 & 2.0 & 5837&526.96 & 499.00 & 0.052 & 94.284 \\
0.05 & 0.5 & 6218&520.22 & 518.51 & 0.003 & 0.876 \\
0.05 & 1.0 & 5858&525.16 & 518.51 & 0.013 & 5.798 \\
0.05 & 1.5 &4797 &534.76 & 518.51 & 0.028 & 22.743 \\
0.05 & 2.0 & 3683&548.47 & 518.51 & 0.054 & 40.786 \\
\hline
\end{tabular}}}
\caption{Sparse approximations for different values of $\epsilon$ on $G(500,p)$, $p=0.9, 0.7, 0.5,0.05$.}\label{G500-2}
\end{table}

\begin{table}[h]
    \centering
   {\footnotesize{	  \begin{tabular}{ccccccccc}
        \toprule
        $n$ & $p$ & $|E|$ & $\gamma_2$ & $ \gamma$ &  \text{Lower} \eqref{cotas_degree}&  $K$ &\text{Upper } \eqref{cotas_degree} & $K^{**}$ \\
        \midrule
        250 & 0.9 & 28068  & 203.97& 238.27& 248.11& 248.11& 248.11 & 247.81 \\
        250 & 0.7 & 21942  & 149.61& 198.09& 248.42& 248.42& 248.42 & 247.56 \\
        250 & 0.5 & 15698  & 100.97& 152.64& 248.98& 248.99& 248.99 & 248.06 \\
        250 & 0.05 & 1599  & 2.66& 33.61& 269.19& 269.45 & 269.51& 287.82 \\
        \bottomrule
    
    \end{tabular}}}
        \footnotesize
    \caption{Kemeny's constant and bounds for $G(250,p)$ with $p=0.9, 0.7, 0.5, 0.05$.} \label{G250-1}
\end{table}

	\begin{table}[h]
\centering
 {\footnotesize{	\begin{tabular}{|c|c|c|c|c|c|c|}
\hline
\multicolumn{7}{|c|}{$G(250,p)$} \\
\hline
$p$ & $\varepsilon$ & $|E|$ & $K'''$ & $K$ & Relative Error & \%  $|E|$  Variation \\
\hline
0.90 & 0.5 & 20772&249.02 & 248.11 & 0.003 & 70.373 \\
0.90 & 1.0 &8832 &251.80 & 248.11 & 0.012 & 88.805 \\
0.90 & 1.5 & 4430&256.66 & 248.11 & 0.028 & 94.647 \\
0.90 & 2.0 & 2599&264.56 & 248.11 & 0.053 & 96.824 \\
0.70 & 0.5 & 17649&249.36 & 248.42 & 0.003 & 64.342 \\
0.70 & 1.0 &8315 &252.12 & 248.42 & 0.012 & 83.052 \\
0.70 & 1.5 & 4268&257.36 & 248.42 & 0.029 & 91.732 \\
0.70 & 2.0 &2583 &264.03 & 248.42 & 0.053 & 95.029 \\
0.50 & 0.5 & 13770&249.94 & 248.98 & 0.003 & 58.341 \\
0.50 & 1.0 &7493 &252.77 & 248.98 & 0.012 & 77.621 \\
0.50 & 1.5 & 4083&258.47 & 248.98 & 0.028 & 89.517 \\
0.50 & 2.0 &2469 &265.49 & 248.98 & 0.052 & 94.482 \\
0.05 & 0.5 & 1598&270.53 & 269.19 & 0.003 & 0.275 \\
0.05 & 1.0 &1571 &273.95 & 269.19 & 0.013 & 5.021 \\
0.05 & 1.5 & 1416&279.57 & 269.19 & 0.028 & 20.912 \\
0.05 & 2.0 & 1202&289.00 & 269.19 & 0.054 & 39.473 \\
\hline
\end{tabular}}}
\caption{Sparse approximations for different values of $\epsilon$ on $G(250,p)$, $p=0.9, 0.7, 0.5,0.05$.}\label{G250-2}
\end{table}

\section{Bounding  Kemeny's constant using eigenvalue interlacing}\label{sec:interlacing}

In this section, we consider unweighted graphs, although the same techniques  apply to weighted graphs as well. We focus on investigating how structural properties - particularly vertex and edge removal and graph clustering - affect Kemeny's constant. Several studies have explored the relationship between graph structure and Kemeny's constant, see e.g. \cite{ABCMP2023,BCK2022,FKK2022}. Additionally, general bounds for Kemeny's constant exist in the literature (see, e.g.,  \cite{kim2023bounds}, \cite[Section 3.2]{Wa17}), but these are not directly comparable to our results, as they do not focus on bounding Kemeny's constant within specific graph substructures.

In order to derive our main results we will use another fundamental technique from spectral graph theory: eigenvalue interlacing.

Consider two sequences of real numbers: $\lambda_1 \geq\cdots \geq\lambda_{n}$ and $\theta_1\geq \cdots \geq \theta_{m}$ with $m<n$. The second sequence is said to \emph{interlace} the first one whenever
\begin{equation*}
\lambda_i\geq \theta_i \geq \lambda_{n-m+i} \quad \text{for }i=1,\ldots,m.
\end{equation*}

If $m=n-1$, the interlacing inequalities become $\lambda_1\geq \theta_1 \geq \lambda_2 \geq \theta_2\geq \cdots \geq \theta_m \geq \lambda_n$, which clarifies the term ``interlacing". Throughout, the $\lambda_i$ and the $\theta_i$ represent the eigenvalues of matrices, $A$ and $B$, respectively.

\begin{theorem}\cite{H1995}[Interlacing Theorem]\label{thm:interlacing}
Let $A$ be a real symmetric $n\times n$  matrix with
eigenvalues $\lambda_1\ge\cdots\ge \lambda_n$. For some $m<n$,
let $S$ be a real $n\times m$ matrix with orthonormal columns,
$S^{\top}S=I$, and consider the matrix $B=S^{\top}AS$,
with eigenvalues $\theta_1\ge\cdots\ge \theta_m$. Then, the eigenvalues of $B$ interlace those of $A$, that is, 
\begin{equation}
\label{ineq:interlacing}
\lambda_i\ge \theta_i\ge \lambda_{n-m+i},\qquad i=1,\ldots, m.
\end{equation}
\end{theorem}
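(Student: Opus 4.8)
The plan is to derive both chains of inequalities in \eqref{ineq:interlacing} from the Courant--Fischer min--max characterization of eigenvalues, using the hypothesis $S^\top S = I$ to transfer Rayleigh quotients of $B$ to Rayleigh quotients of $A$. First I would record two elementary consequences of $S$ having orthonormal columns: the linear map $y\mapsto Sy$ is injective, and for every $y\in\mathbb{R}^m$ one has $(Sy)^\top(Sy)=y^\top S^\top S y=y^\top y$ together with $y^\top B y=y^\top S^\top A S y=(Sy)^\top A(Sy)$, so the Rayleigh quotient of $B$ at a nonzero $y$ equals the Rayleigh quotient of $A$ at $Sy$. Consequently, if $U\subseteq\mathbb{R}^m$ is a subspace of dimension $i$, then $W:=SU$ is a subspace of $\mathrm{col}(S)$ of the same dimension $i$, and the two Rayleigh quotients match along this correspondence.

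Next I would apply the ``max'' form of Courant--Fischer to the $m\times m$ symmetric matrix $B$: for each $i=1,\dots,m$,
$$\theta_i=\max_{\dim U=i}\ \min_{0\ne y\in U}\frac{y^\top B y}{y^\top y}=\max_{\substack{W\subseteq\mathrm{col}(S)\\ \dim W=i}}\ \min_{0\ne w\in W}\frac{w^\top A w}{w^\top w},$$
the second equality being exactly the change of variables $W=SU$ described above. Since the maximum on the right ranges over a subfamily of all $i$-dimensional subspaces of $\mathbb{R}^n$, it is at most the corresponding maximum taken over all such subspaces, which by Courant--Fischer applied to $A$ equals $\lambda_i$. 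This gives $\theta_i\le\lambda_i$ for all $i$, the left-hand inequality.

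For the right-hand inequality $\theta_i\ge\lambda_{n-m+i}$, I would apply the bound just proved to the symmetric matrix $-A$, whose ordered eigenvalues are $-\lambda_n\ge\cdots\ge-\lambda_1$ and for which $S^\top(-A)S=-B$ has ordered eigenvalues $-\theta_m\ge\cdots\ge-\theta_1$. Applying $\theta_i\le\lambda_i$ to the pair $(-A,-B)$ with index $m-i+1$ shows that the $(m-i+1)$-st largest eigenvalue of $-B$ is at most that of $-A$, i.e.\ $-\theta_i\le-\lambda_{n-m+i}$, which rearranges to the claim. Alternatively, the same conclusion follows directly from the dual ``min'' form of Courant--Fischer applied with $(m-i+1)$-dimensional subspaces, using that restricting the minimization to the subfamily $W\subseteq\mathrm{col}(S)$ can only increase the value. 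Combining the two inequalities yields \eqref{ineq:interlacing}.

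The argument is essentially routine; the only points requiring care are checking that $U\mapsto SU$ is dimension-preserving (which is precisely where $S^\top S=I$ is used, guaranteeing injectivity and preservation of norms simultaneously) and matching the dimension indices correctly when passing to the dual min--max or to $-A$, so that the shift by $n-m$ comes out exactly right.
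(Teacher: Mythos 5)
Your proof is correct. Note that the paper itself gives no proof of Theorem \ref{thm:interlacing}: it is quoted directly from Haemers \cite{H1995}, so there is nothing in the text to compare against line by line. Your argument --- transferring Rayleigh quotients through the isometry $y\mapsto Sy$, applying the max--min form of Courant--Fischer to $B$ and observing that the supremum over $i$-dimensional subspaces of $\mathrm{col}(S)$ is dominated by the supremum over all $i$-dimensional subspaces of $\mathbb{R}^n$, then passing to $-A$ and $-B$ for the lower inequality --- is a complete and standard proof, and your index bookkeeping ($-\theta_i\le-\lambda_{n-m+i}$ via the $(m-i+1)$-st largest eigenvalue of $-A$) is right. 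For reference, Haemers' original argument is a close cousin that avoids invoking the full min--max theorem: for each $i$ he chooses a nonzero $s_i$ in the span of the first $i$ eigenvectors of $B$ orthogonal to $S^{\top}u_1,\dots,S^{\top}u_{i-1}$ (with $u_j$ the eigenvectors of $A$), which exists by a dimension count, so that $Ss_i\perp u_1,\dots,u_{i-1}$ and $\lambda_i\ge (Ss_i)^{\top}A(Ss_i)/(Ss_i)^{\top}(Ss_i)=s_i^{\top}Bs_i/s_i^{\top}s_i\ge\theta_i$; the reverse inequality again follows by negating $A$. The two routes are essentially equivalent Rayleigh-quotient arguments; yours leans on the named min--max theorem, while Haemers' constructs the extremal test vector explicitly, which also sets up his notion of tight interlacing.
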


Next, we  derive a bound on Kemeny's constant using the fact that the normalized adjacency matrix is  symmetric, allowing the application of interlacing, and the established connection between $K(G)$ and the eigenvalues of this matrix (see Eq. \eqref{Lovasz}).

Two interesting particular cases of Theorem \ref{thm:interlacing} are obtained by choosing appropriately the matrix $S$.

The first one recovers the well-known Cauchy interlacing.

\begin{corollary}\label{cor:2.2Haemerspaper}[Principal submatrix interlacing (Cauchy interlacing)]
If $B$ is a principal submatrix of a symmetric matrix $A$, then the eigenvalues of $B$ interlace the eigenvalues of $A$.
\end{corollary}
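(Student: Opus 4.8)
The plan is to derive Cauchy interlacing as the special case of Theorem~\ref{thm:interlacing} in which the matrix $S$ is chosen to be a coordinate selection matrix. Concretely, suppose $B$ is the $m\times m$ principal submatrix of the symmetric $n\times n$ matrix $A$ indexed by a set $J\subseteq\{1,\dots,n\}$ with $|J|=m$. First I would let $S$ be the $n\times m$ matrix whose columns are the standard basis vectors ${\sf e}^{j}$ for $j\in J$ (in increasing order of $j$). This $S$ clearly has orthonormal columns, so $S^\top S=I_m$, and the hypotheses of Theorem~\ref{thm:interlacing} are met.

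The key computational step is to observe that $S^\top A S$ is exactly the principal submatrix $B$: the $(p,q)$ entry of $S^\top A S$ equals ${{\sf e}^{j_p}}^{\top} A\, {\sf e}^{j_q}=A_{j_p j_q}$, where $j_1<\dots<j_m$ are the elements of $J$, which is precisely the $(p,q)$ entry of the principal submatrix indexed by $J$. Hence the eigenvalues $\theta_1\ge\cdots\ge\theta_m$ of $B$ are the eigenvalues of $S^\top A S$, and Theorem~\ref{thm:interlacing} applies verbatim to give $\lambda_i\ge\theta_i\ge\lambda_{n-m+i}$ for $i=1,\dots,m$, which is the assertion that the eigenvalues of $B$ interlace those of $A$.

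There is essentially no obstacle here: the only thing to be careful about is bookkeeping with the ordering of the index set $J$ so that $S^\top A S$ is literally the principal submatrix rather than a permuted version of it (and even a permutation would not matter, since permutation similarity preserves the spectrum). So the entire argument reduces to exhibiting the right $S$ and recording the identity $S^\top A S=B$; I would keep the write-up to two or three sentences and simply invoke Theorem~\ref{thm:interlacing}.
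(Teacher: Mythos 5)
Your proposal is correct and coincides with the paper's (implicit) argument: the paper presents Cauchy interlacing precisely as the special case of Theorem~\ref{thm:interlacing} obtained by choosing $S$ appropriately, and the coordinate-selection matrix with $S^\top A S = B$ is exactly that choice. Nothing is missing.
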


The second case consists  in considering $\mathcal{P}=\{U_{1},\ldots,U_{m}\}$  a partition of the
vertex set $V$, with each $U_{i}\neq \emptyset$.
Let $A$ be partitioned according to $\mathcal{P}$, that is
$$A=\left[ \begin{array}{ccc}
A_{1,1} & \cdots & A_{1,m} \\
\vdots &  & \vdots \\
A_{m,1} & \cdots & A_{m,m}
 \end{array} \right],$$
where $A_{i,j}$ denotes the submatrix (block) of $A$ formed by
rows in $U_{i}$ and columns in $U_{j}$. Then, the \emph{quotient matrix} of $A$ with respect to $\mathcal{P}$ is the $m\times m$ matrix $B=S^\top A S$ whose entries are the average row sums of the blocks of $A$, more precisely:
\begin{equation*}
(B)_{i,j}=\frac{1}{|U_{i}|}\1_{U_i}^{\top}A_{i,j}\1_{U_j}.
\end{equation*}

\begin{corollary}\label{cor:2.3Haemerspaper}[Quotient matrix interlacing \cite{H1995}]
Suppose $B$ is the quotient matrix of a symmetric partitioned matrix $A$. Then, the eigenvalues of $B$ interlace the eigenvalues of $A$.
\end{corollary}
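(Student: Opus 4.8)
The plan is to reduce the statement to the Interlacing Theorem (Theorem~\ref{thm:interlacing}) by exhibiting $B$ as a matrix \emph{similar} to $S^{\top}AS$ for a suitable $S$ with orthonormal columns. (If $m=n$ every block is a singleton and $B$ is a relabelling of $A$, so we may assume $m<n$.)

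First I would take $S$ to be the $n\times m$ matrix whose $i$-th column is the normalized characteristic vector $\tfrac{1}{\sqrt{|U_{i}|}}\1_{U_i}$ of the block $U_i$. Since the $U_i$ are pairwise disjoint, distinct columns of $S$ have disjoint supports and each has unit norm, so $S^{\top}S=I_{m}$. Hence $S$ meets the hypothesis of Theorem~\ref{thm:interlacing}, and the \emph{symmetric} $m\times m$ matrix $\widehat{B}:=S^{\top}AS$ has eigenvalues that interlace those of $A$.

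Next I would compare $\widehat{B}$ with $B$ entrywise. A direct computation gives $(\widehat{B})_{i,j}=\tfrac{1}{\sqrt{|U_i|\,|U_j|}}\,\1_{U_i}^{\top}A_{i,j}\1_{U_j}$, while by definition $(B)_{i,j}=\tfrac{1}{|U_i|}\,\1_{U_i}^{\top}A_{i,j}\1_{U_j}$. Writing $\Delta=\mathrm{diag}\big(\sqrt{|U_1|},\ldots,\sqrt{|U_m|}\big)$, one checks immediately that $B=\Delta^{-1}\widehat{B}\,\Delta$. Thus $B$ and $\widehat{B}$ are similar; in particular $B$ has real eigenvalues and they coincide with those of $\widehat{B}$. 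Combining this with the previous paragraph, the eigenvalues of $B$ interlace those of $A$, which is the claim.

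The only genuine subtlety is that the quotient matrix $B$, being built from average row sums of the blocks, is in general \emph{not} symmetric, so Theorem~\ref{thm:interlacing} cannot be applied to it directly; the diagonal conjugation by $\Delta$ that symmetrizes it into $S^{\top}AS$ is precisely what makes the argument work. Beyond that, the proof is a routine verification of the two identities $S^{\top}S=I_{m}$ and $B=\Delta^{-1}S^{\top}AS\,\Delta$.
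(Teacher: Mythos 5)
Your proof is correct and follows exactly the route the paper intends: the corollary is obtained as the special case of Theorem~\ref{thm:interlacing} in which $S$ is the column-normalized characteristic matrix of the partition, which is also Haemers' original argument in \cite{H1995}. Your explicit remark that the quotient matrix $B$ is generally not symmetric and only becomes $S^{\top}AS$ after conjugation by $\Delta=\mathrm{diag}\bigl(\sqrt{|U_1|},\ldots,\sqrt{|U_m|}\bigr)$ is a useful clarification of a point the paper's definition (which writes $B=S^{\top}AS$ directly) leaves implicit.
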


The most general version of edge interlacing was proposed by Hall, Pate and Stewart \cite{edgeinterlacing}.

\begin{lemma}\cite[Proposition 3.2]{edgeinterlacing}\label{thm:removingredges}
Let $G$ be a graph and let $H$ be a subgraph of $G$ obtained by deleting $r$
edges. If
$\mu_1\geq \cdots \geq \mu_n=0$
and
$\theta_1\geq \cdots \geq \theta_n=0$
are the eigenvalues of the normalized Laplacian matrices ${\cal{L}}_G$ and ${\cal{L}}_H$, respectively, then
\[\mu_{i-r} \geq \theta_i \geq \mu_{i+r} \qquad \text{for } i=1,2,\ldots, n, \vspace{-5pt}\]
with the convention of $\mu_i=2$ for each  $i\leq 0$ and $\mu_{i}=0$ for each $i\geq n+1$.
\end{lemma}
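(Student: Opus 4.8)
The plan is to reduce the claimed eigenvalue relation to the quotient-matrix interlacing in Corollary~\ref{cor:2.3Haemerspaper} (equivalently, to Theorem~\ref{thm:interlacing} with a cleverly chosen $S$), since deleting $r$ edges can be encoded as a rank-bounded change on an enlarged vertex set. First I would set up the bipartite-style incidence construction: let $H$ be obtained from $G$ by deleting the edge set $F$ with $|F|=r$. Write the normalized Laplacian identity ${\cal L}_G = I - D^{-1/2}AD^{-1/2}$ and similarly for $H$, and observe that $A_G - A_H$ is supported on the $2r$ entries coming from $F$. The key difficulty is that the \emph{degree matrices} $D_G$ and $D_H$ differ too (deleting an edge lowers two degrees), so ${\cal L}_G-{\cal L}_H$ is not simply a low-rank perturbation of the normalized Laplacian; one cannot directly invoke Weyl's inequality on ${\cal L}$. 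This is the main obstacle, and it is precisely why the statement is phrased with a shift of $r$ on \emph{both} sides rather than the naive $r$ coming from a rank-$r$ argument.

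To get around this, I would use the standard trick of passing through the signless/incidence form. Recall that for the combinatorial Laplacian one has $L_G = \sum_{e\in E(G)} b_e b_e^\top$ where $b_e$ is the signed incidence vector of $e$; then $L_G = L_H + \sum_{e\in F} b_e b_e^\top$, a genuine rank-$r$ (in fact PSD rank-$\le r$) bump. The normalized Laplacian is $D^{-1/2}LD^{-1/2}$, and the cleanest route is to work with the eigenvalues of ${\cal L}$ via the generalized eigenvalue problem $L_G \x = \mu\, D_G \x$. I would then compare the Rayleigh quotients $\dfrac{\x^\top L_G \x}{\x^\top D_G \x}$ and $\dfrac{\x^\top L_H \x}{\x^\top D_H \x}$ on a suitably chosen subspace. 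Since $L_G \succeq L_H$ and $D_G \succeq D_H$ entrywise on the diagonal, the monotonicity goes in opposite directions in numerator and denominator, so a direct min-max on $G$ alone is not enough; one needs a subspace of codimension $\le r$ on which $L_G - L_H$ vanishes, i.e. the common kernel of the $r$ functionals $\x \mapsto b_e^\top \x$, $e\in F$. On that subspace $\x^\top L_G\x = \x^\top L_H\x$ and $\x^\top D_G \x$, $\x^\top D_H\x$ still differ, but only in the $\le 2r$ coordinates incident to $F$; restricting further to the (codimension $\le 2r$) subspace where those coordinates also vanish makes the two Rayleigh quotients agree exactly. Counting codimensions, this yields one shift of $r$ from the $L$-side and the matching shift is absorbed by the Courant--Fischer min-max for ${\cal L}_H$, giving $\mu_{i-r}(G) \ge \theta_i(H) \ge \mu_{i+r}(G)$ after re-indexing.

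Concretely the steps, in order, are: (1) fix the deleted set $F$, $|F| = r$, and record $L_G = L_H + B_F$ with $B_F = \sum_{e\in F} b_e b_e^\top \succeq 0$ of rank $\le r$; (2) let $W \subseteq \mathbb{R}^n$ be the subspace annihilated by all $b_e$, $e\in F$, so $\dim W \ge n-r$ and $\x^\top L_G\x = \x^\top L_H\x$ for $\x\in W$; (3) apply the Courant--Fischer characterization $\theta_i(H) = \min_{\dim U = n-i+1}\max_{0\ne\x\in U}\dfrac{\x^\top L_H\x}{\x^\top D_H\x}$ and restrict the competing subspaces $U$ to $U\cap W$, losing at most $r$ in the index; (4) on $U\cap W$ replace $L_H$ by $L_G$ for free, and control the $D_G$ versus $D_H$ discrepancy by noting $D_G = D_H + \Delta$ with $\Delta$ diagonal, PSD, supported on $\le 2r$ vertices — here I would either absorb this into a further $r$-index loss via the same codimension argument applied to the $2r$ coordinate functionals, or, more slickly, observe $\x^\top D_H \x \le \x^\top D_G\x$ and that the reverse bound holds on the codimension-$\le 2r$ subspace killing the incident coordinates, then check the arithmetic so the total index shift is exactly $r$ (this is where one uses that each deleted edge drops exactly the degrees of its two endpoints, and that the two endpoint-coordinate constraints per edge are not independent of the already-imposed $b_e^\top\x=0$ — so the net extra codimension is $r$, not $2r$); (5) the symmetric argument with the roles of $G$ and $H$ swapped, or equivalently applying the max-min form, gives the other inequality $\mu_{i-r}(G)\ge\theta_i(H)$; (6) finally handle the boundary conventions $\mu_i = 2$ for $i\le 0$ and $\mu_i = 0$ for $i\ge n+1$ by noting these are exactly the extreme values the Rayleigh quotient of ${\cal L}_G$ can take, so the truncated min-max degenerates correctly. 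I expect step (4) — reconciling the degree-matrix change so that the bookkeeping yields a single $r$ rather than an inflated shift — to be the crux, and it is worth stating cleanly as a lemma that the pair of constraints $b_e^\top \x = 0$ together with "$\x$ vanishes on $e$'s endpoints" spans only a $r$-dimensional (not $3r$-dimensional) worth of new conditions beyond $W$, because vanishing on both endpoints already implies $b_e^\top\x=0$. Alternatively, one can bypass the delicate count entirely by citing \cite{edgeinterlacing} for the single-edge case $r=1$ and iterating $r$ times, which turns the whole proof into a clean induction — I would present the induction as the main line and relegate the direct codimension computation to a remark.
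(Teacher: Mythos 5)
First, a point of comparison: the paper does not prove Lemma~\ref{thm:removingredges} at all --- it is quoted verbatim from \cite{edgeinterlacing}, so your attempt can only be measured against that source (whose single-edge case goes back to Chen, Davis, Hall, Li, Patel and Stewart). Your main line has a genuine gap at exactly the step you flag as the crux. To make \emph{both} the numerator $x^\top Lx$ and the denominator $x^\top Dx$ of the generalized Rayleigh quotient agree between $G$ and $H$, you must force $x$ to vanish at every endpoint of every deleted edge; when the $r$ deleted edges form a matching these are $2r$ independent linear conditions, and the redundancy you invoke goes the wrong way (vanishing at both endpoints \emph{implies} $b_e^\top x=0$, so the union of all the constraints amounts to $2r$ conditions, not $r$). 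A codimension-$2r$ subspace on which the two quotients coincide yields, via Courant--Fischer, only $\mu_{i-2r}\ge\theta_i\ge\mu_{i+2r}$; already for $r=1$ this is strictly weaker than the stated shift. Your fallback --- cite the single-edge case and induct --- is logically sound (the induction chains cleanly and the boundary conventions are absorbed because every normalized Laplacian eigenvalue lies in $[0,2]$), but it is circular as a proof strategy, since the $r=1$ case is precisely where the difficulty lives and your method does not establish it.

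The missing idea is that one should not make the two Rayleigh quotients \emph{equal}, only \emph{comparable}, and that one linear condition per deleted edge suffices for each direction provided the conditions are chosen differently for the two inequalities. Writing $x^\top L_Gx=x^\top L_Hx+\sum_{uv\in F}(x_u-x_v)^2$ and $x^\top D_Gx=x^\top D_Hx+\sum_{uv\in F}(x_u^2+x_v^2)$, the mediant inequality gives $R_H(x)\ge R_G(x)$ on the codimension-$\le r$ subspace $\{x:\,x_u=x_v\ \forall uv\in F\}$ (the added numerator terms vanish while the added denominator terms are nonnegative), which yields $\theta_i\ge\mu_{i+r}$; and $R_H(x)\le R_G(x)$ on the codimension-$\le r$ subspace $\{x:\,x_u=-x_v\ \forall uv\in F\}$, because there each local quotient $(x_u-x_v)^2/(x_u^2+x_v^2)$ equals $2$, which dominates $R_G(x)$ and hence forces the mediant to tilt the other way; this yields $\theta_i\le\mu_{i-r}$. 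This two-subspace trick (or an equivalent argument through the incidence form of $\mathcal{L}$) is what your write-up is missing, and without it the stated shift of $r$ is not reached.
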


Next, we apply the above results to obtain bounds on Kemeny's constant. 


\begin{theorem}\label{thm:vertexinterlacingbounds}
Let $G$ be a connected graph, $D^{-1/2}AD^{-1/2}$ its normalized adjacency matrix  and $B$ a $m\times m$ matrix with eigenvalues $\theta_1\ge\cdots\ge \theta_m$, such that $B$ is
\begin{description}
    \item[$(i)$] a principal submatrix of $D^{-1/2}AD^{-1/2}$, or,
    \item[$(ii)$] the quotient matrix resulting of a partition $\{V_1,\dots,V_m\}$ of the vertex set of $G$.
\end{description}
Then,
$$
 \sum_{i=2}^{m}\frac{1}{1-\theta_i} + \dfrac{(n-m)}{2} \leq K(G)\leq \sum_{i=2}^{m}\frac{1}{1-\theta_i} + \dfrac{(n-m)}{1-\lambda_2}.
$$ 
\end{theorem}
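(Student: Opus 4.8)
The plan is to combine the eigenvalue formula \eqref{Lovasz}, namely $K(G)=\sum_{j=2}^n \frac{1}{\mu_j}=\sum_{j=2}^n\frac{1}{1-\lambda_j}$, with the interlacing inequalities supplied by Theorem \ref{thm:interlacing} (via Corollary \ref{cor:2.2Haemerspaper} in case $(i)$ and Corollary \ref{cor:2.3Haemerspaper} in case $(ii)$). In both cases $B=S^\top(D^{-1/2}AD^{-1/2})S$ for a suitable $n\times m$ matrix $S$ with $S^\top S=I$, so its eigenvalues $\theta_1\ge\cdots\ge\theta_m$ interlace $\lambda_1\ge\cdots\ge\lambda_n$: $\lambda_i\ge\theta_i\ge\lambda_{n-m+i}$ for $i=1,\dots,m$. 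First I would record that since $G$ is connected, $\lambda_1=1$ is simple, and that $\theta_1=1$ as well: in case $(i)$ this needs a small argument, but in case $(ii)$ the all-ones vector pulls back correctly, so I will state $\theta_1=1$ and split off the $i=1$ term, leaving $K(G)=\sum_{j=2}^n\frac{1}{1-\lambda_j}$ to be compared with $\sum_{i=2}^m\frac{1}{1-\theta_i}$.

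The key step is to split the sum $\sum_{j=2}^n\frac{1}{1-\lambda_j}$ into the ``interlaced'' part indexed by $j=2,\dots,m$ and the ``tail'' part indexed by $j=m+1,\dots,n$. For the interlaced part: from $\theta_i\ge\lambda_{n-m+i}$ we do not directly get $\frac{1}{1-\lambda_i}$ vs $\frac{1}{1-\theta_i}$ in the right direction, so instead I use $\lambda_i\ge\theta_i$, which gives $1-\lambda_i\le 1-\theta_i$, hence $\frac{1}{1-\lambda_i}\ge\frac{1}{1-\theta_i}$ for $i=2,\dots,m$ (all these quantities are positive since $\lambda_i<1$ for $i\ge2$ by connectedness, and $\theta_i\le\lambda_i<1$). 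Summing, $\sum_{i=2}^m\frac{1}{1-\lambda_i}\ge\sum_{i=2}^m\frac{1}{1-\theta_i}$, which produces the $\sum_{i=2}^m\frac{1}{1-\theta_i}$ term appearing in both bounds. For the tail $\sum_{j=m+1}^n\frac{1}{1-\lambda_j}$, there are exactly $n-m$ terms. Each satisfies $-1\le\lambda_j\le\lambda_2$ (the upper bound because $j\ge m+1\ge 2$ and the $\lambda$'s are decreasing; also $\lambda_j<1$). Therefore $1-\lambda_2\le 1-\lambda_j\le 2$, which yields $\frac{1}{2}\le\frac{1}{1-\lambda_j}\le\frac{1}{1-\lambda_2}$. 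Summing over the $n-m$ tail indices gives $\frac{n-m}{2}\le\sum_{j=m+1}^n\frac{1}{1-\lambda_j}\le\frac{n-m}{1-\lambda_2}$. Adding the interlaced-part inequality to the tail inequality yields
$$
\sum_{i=2}^m\frac{1}{1-\theta_i}+\frac{n-m}{2}\;\le\;K(G)\;\le\;\sum_{i=2}^m\frac{1}{1-\theta_i}+\frac{n-m}{1-\lambda_2},
$$
after noting that for the upper bound we also used $\sum_{i=2}^m\frac{1}{1-\lambda_i}\le\sum_{i=2}^m\frac{1}{1-\theta_i}$ is the \emph{wrong} direction — so I must be careful here: the upper bound must instead bound $\sum_{i=2}^m\frac{1}{1-\lambda_i}$ from above by $\sum_{i=2}^m\frac{1}{1-\theta_i}$ is false in general, hence the correct route is to reorganize.

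The main obstacle, and the point requiring care, is exactly this asymmetry: $\lambda_i\ge\theta_i$ only gives a \emph{lower} bound $\frac{1}{1-\lambda_i}\ge\frac{1}{1-\theta_i}$, while a matching upper bound on $\frac{1}{1-\lambda_i}$ would need $\theta_i\ge\lambda_i$, which interlacing does not give. The resolution is to use the \emph{other} interlacing inequality $\theta_i\ge\lambda_{n-m+i}$: re-index the sum $\sum_{j=2}^n\frac1{1-\lambda_j}$ so that the terms $\lambda_{n-m+2},\dots,\lambda_n$ are grouped with $\theta_2,\dots,\theta_m$ (giving, via $\theta_i\ge\lambda_{n-m+i}$ and positivity, $\frac1{1-\lambda_{n-m+i}}\le\frac1{1-\theta_i}$), while the remaining $n-m$ indices $\lambda_2,\dots,\lambda_{n-m+1}$ form the tail, each lying in $[-1,\lambda_2]\cap(-\infty,1)$ so that $\frac1{1-\lambda_j}\le\frac1{1-\lambda_2}$. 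This gives the upper bound. For the lower bound, group the other way: $\theta_i\le\lambda_i$ gives $\frac1{1-\theta_i}\le\frac1{1-\lambda_i}$ for the interlaced block $j=2,\dots,m$, and the remaining tail indices $j=m+1,\dots,n$ each satisfy $1-\lambda_j\le 2$, i.e. $\frac1{1-\lambda_j}\ge\frac12$. So the two bounds use two \emph{different} groupings of the $n-1$ summands, each legitimate, and the theorem follows. I would write the lower bound and upper bound as separate short paragraphs to keep the two groupings distinct, and close by remarking that the hypothesis that $G$ is connected is what guarantees $\lambda_2<1$ so that $\frac1{1-\lambda_2}$ is finite and positive.
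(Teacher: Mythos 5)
Your final argument is correct and coincides with the paper's own proof: the lower bound comes from $\lambda_i\ge\theta_i$ (hence $\tfrac{1}{1-\lambda_i}\ge\tfrac{1}{1-\theta_i}$ for $i=2,\dots,m$) together with $\tfrac{1}{1-\lambda_j}\ge\tfrac12$ on the $n-m$ tail terms, and the upper bound from the other interlacing inequality $\theta_j\ge\lambda_{n-m+j}$ together with $\tfrac{1}{1-\lambda_j}\le\tfrac{1}{1-\lambda_2}$ on the remaining $n-m$ indices, exactly as in the paper. The false start in your middle paragraph is one you correctly identify and repair yourself, and the claim $\theta_1=1$ is never actually used (the sums start at $i=2$ and only require $\theta_i\le\lambda_i<1$ for $i\ge 2$), so both could simply be omitted from a final write-up.
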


\begin{proof} (i) By Corollary \ref{cor:2.2Haemerspaper}, it follows that $\lambda_i\geq \theta_i$  and in consequence $\frac{1}{1-\lambda_i} \geq \frac{1}{1-\theta_i}  \text{ for }i=2,\ldots,m$. Moreover, since for the normalized adjacency matrix it holds that $\lambda_i\geq -1$ \cite{Lo16}, it follows that $\frac{1}{1-\lambda_i} \geq \frac{1}{2} \text{ for }i=m+1,\ldots,n$. From Eq. \eqref{Lovasz} we obtain the desired lower bound. 

To get the upper bound we observe that from Corollary \ref{cor:2.2Haemerspaper} 
$$\dfrac{1}{1-\lambda_{n-m+j}}\le \dfrac{1}{1-\theta_j}, \hspace{.25cm}j=2,\ldots,m.$$
Moreover, $\frac{1}{1-\lambda_i} \leq \frac{1}{1-\lambda_2}$ and hence
$$K(G)=\sum_{i=2}^{n}\frac{1}{1-\lambda_i}\leq \sum_{i=2}^{m}\frac{1}{1-\theta_i} + \dfrac{(n-m)}{1-\lambda_2}.$$

(ii) Analogously, the result follows  using Eq. \eqref{Lovasz} and applying Corollary \ref{cor:2.3Haemerspaper}. 
\qedhere
\end{proof} 

For instance, for the lower bound from Theorem \ref{thm:vertexinterlacingbounds}$(i)$ to be as tight as possible we would need the smallest $n-m$  eigenvalues of the normalized adjacency matrix to be as close as possible to $\frac{1}{2}$.

\begin{corollary}\label{coro:vertexinterlacingbounds}
Let $G$ be a connected graph of order $n$ with degrees $k_1, \dots, k_n$ and with eigenvalues of its normalized adjacency matrix $1=\lambda_1 > \lambda_2\geq \cdots \geq \lambda_n\geq -1$. Then,
$$K(G) \leq \min_{i\sim j}\left\{\frac{\sqrt{k_ik_j}}{\sqrt{k_ik_j}+1}\right\}+ \frac{n-2}{1-\lambda_2}.$$
\end{corollary}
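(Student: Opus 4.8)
The plan is to specialize Theorem \ref{thm:vertexinterlacingbounds}$(i)$ to the case $m=2$, choosing the $2\times 2$ principal submatrix of the normalized adjacency matrix $D^{-1/2}AD^{-1/2}$ indexed by an adjacent pair $\{i,j\}$, and then optimize over all such pairs. First I would fix an edge $\{i,j\}\in E$ and consider the principal submatrix
$$
B_{ij}=\begin{pmatrix} 0 & \dfrac{c_{ij}}{\sqrt{k_ik_j}} \\[2mm] \dfrac{c_{ij}}{\sqrt{k_ik_j}} & 0 \end{pmatrix},
$$
whose eigenvalues are $\theta_1=\dfrac{c_{ij}}{\sqrt{k_ik_j}}$ and $\theta_2=-\dfrac{c_{ij}}{\sqrt{k_ik_j}}$ (for an unweighted graph, $c_{ij}=1$, so $\theta_2=-1/\sqrt{k_ik_j}$). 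Since $B_{ij}$ is a principal submatrix of $D^{-1/2}AD^{-1/2}$, Corollary \ref{cor:2.2Haemerspaper} applies, and Theorem \ref{thm:vertexinterlacingbounds}$(i)$ with $m=2$ gives
$$
K(G)\le \frac{1}{1-\theta_2}+\frac{n-2}{1-\lambda_2}=\frac{1}{1+\dfrac{1}{\sqrt{k_ik_j}}}+\frac{n-2}{1-\lambda_2}=\frac{\sqrt{k_ik_j}}{\sqrt{k_ik_j}+1}+\frac{n-2}{1-\lambda_2}.
$$

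Next I would observe that this inequality holds for \emph{every} edge $\{i,j\}$, so one may take the minimum of the right-hand side over all adjacent pairs $i\sim j$; the second term $\dfrac{n-2}{1-\lambda_2}$ does not depend on the chosen edge, so the minimization only affects the first term, yielding
$$
K(G)\le \min_{i\sim j}\left\{\frac{\sqrt{k_ik_j}}{\sqrt{k_ik_j}+1}\right\}+\frac{n-2}{1-\lambda_2},
$$
which is exactly the claimed bound. One small bookkeeping point to check is that $\sum_{i=2}^{m}\frac{1}{1-\theta_i}$ with $m=2$ is just the single term $\frac{1}{1-\theta_2}$, and that $\theta_1=\dfrac{c_{ij}}{\sqrt{k_ik_j}}$ is the Perron-type eigenvalue of $B_{ij}$ playing the role analogous to $\lambda_1=1$ (so it is correctly excluded from the sum).

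I do not expect any serious obstacle here: the result is essentially a direct corollary of Theorem \ref{thm:vertexinterlacingbounds}, and the only thing that requires care is the explicit diagonalization of the $2\times 2$ block and making sure the indices in the sum $\sum_{i=2}^m$ line up with the correct (smallest) eigenvalue $\theta_2=-c_{ij}/\sqrt{k_ik_j}$. The mild subtlety worth a sentence in the writeup is why $B_{ij}$ has exactly these eigenvalues: its entries are the relevant entries of the normalized adjacency matrix, the diagonal entries are $0$ because $A$ has zero diagonal, and the off-diagonal entry is $c_{ij}/\sqrt{k_ik_j}$ by definition of $D^{-1/2}AD^{-1/2}$; a $2\times2$ symmetric matrix with zero diagonal and off-diagonal $a$ has eigenvalues $\pm a$.
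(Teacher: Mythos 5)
Your proposal is correct and follows essentially the same route as the paper: apply Theorem \ref{thm:vertexinterlacingbounds}$(i)$ with $m=2$ to the principal submatrix indexed by an adjacent pair, compute $\theta_2=-1/\sqrt{k_ik_j}$, and minimize over edges. In fact your explicit diagonalization gets the sign of $\theta_2$ right where the paper's text writes $\theta_2=1/\sqrt{k_ik_j}$ (a typo, since the subsequent formula $\frac{1}{1-\theta_2}=\frac{\sqrt{k_ik_j}}{\sqrt{k_ik_j}+1}$ only holds for the negative eigenvalue); the paper's additional remark about nonadjacent pairs is not needed for the stated bound.
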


\begin{proof}
By applying the case $(i)$ of Theorem \ref{thm:vertexinterlacingbounds} with $m=2$ and columns corresponding to adjacent vertices we obtain that $\theta_2=\dfrac{1}{\sqrt{k_ik_j}}$ and hence
$$\sum_{i=2}^{2}\frac{1}{1-\theta_i}=\dfrac{\sqrt{k_ik_j}}{\sqrt{k_ik_j}+1},$$
and for nonadjacent vertices it gives
$\displaystyle\sum\limits_{i=2}^{2}\frac{1}{1-\theta_i}=1.$ Hence,
\begin{align*}
&\min \left\{\min_{ i\sim j}\left\{\frac{\sqrt{k_ik_j}}{\sqrt{k_ik_j}+1},1 \right\}\right\}=\min_{i\sim j}\left\{\frac{\sqrt{k_ik_j}}{\sqrt{k_ik_j}+1} \right\}.   
\qedhere\end{align*}
\end{proof}

\begin{example}\label{ex:Kn}
If $K_n$ is a complete graph on $n$ vertices, then Corollary \ref{coro:vertexinterlacingbounds} gives 
$$K(K_n)\leq \frac{(n-1)^2}{n},$$
which is a tight bound since it is known that $K(K_n)=\dfrac{(n-1)^2}{n}.$
\end{example}

\begin{example}\label{ex:Knm}
If $K_{p,q}$ ($p\geq q$) is a complete bipartite graph on $n$ vertices, then Corollary \ref{coro:vertexinterlacingbounds} gives 
$$K(K_{p,q})\leq p+q- \frac{q+2}{q+1},$$
and it is known that $K(K_{p,q})=p+q-\frac{3}{2}.$ Note that for the star the upper bound is tight.
\end{example}



It is well known that adding an edge to a graph can cause Kemeny's constant to decrease, increase, or remain unchanged. Kirkland et al. \cite{KLMZ2024} provided a quantitative analysis of this behavior when the initial graph is a tree of fixed order. In this section, we  analyze how Kemeny's constant changes when multiple edges are removed.

To do so, we apply the edge interlacing technique introduced by Hall, Patel, and Stewart \cite{edgeinterlacing} (see Lemma \ref{thm:removingredges}). As a particular case, we show that the removal of a single branch follows directly as a corollary of our more general results.

\begin{theorem}\label{thm:detemingredges}
Let $G$ be a connected graph, and let $H$ be a subgraph of $G$ obtained by deleting $r\leq n-1$ edges such that $H$ is connected. If
$\mu_1\geq \cdots \geq \mu_n=0$
and
$\theta_1\geq \cdots \geq \theta_n=0$
are the eigenvalues of the normalized Laplacian matrices ${\cal{L}}_G$ and ${\cal{L}}_H$, respectively. Then

$$\frac{r}{\mu_1}+\sum_{j=1}^{n-r-1}\frac{1}{\theta_{j}} \leq K(G)\leq \sum_{j=r+1}^{n-1} \frac{1}{\theta_{j}}+\frac{r}{\mu_{n-1}}.$$

\end{theorem}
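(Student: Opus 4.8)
The plan is to combine the eigenvalue formula $K(G)=\sum_{i=2}^{n}\frac{1}{\mu_i}$ from Eq.~\eqref{Lovasz} with the edge-deletion interlacing bounds of Lemma~\ref{thm:removingredges}. Writing $K(G)=\sum_{i=2}^{n}\frac{1}{\mu_i}$, I would reindex so that the sum runs over the $n-1$ nonzero eigenvalues of $\LL_G$, and then bound each term $\frac{1}{\mu_i}$ using the two-sided estimate $\mu_{i-r}\ge\theta_i\ge\mu_{i+r}$ supplied by the lemma (equivalently $\theta_{i-r}\le\mu_i\le\theta_{i+r}$). The point is that deleting $r$ edges shifts the spectrum by at most $r$ positions in the interlacing sense, so the reciprocals can be compared after an index shift, with the indices falling outside the range $\{2,\dots,n-1\}$ absorbed into the extremal eigenvalues $\mu_1$ and $\mu_{n-1}$ via the stated convention ($\mu_i=2$ for $i\le 0$, $\mu_i=0$ for $i\ge n+1$) and the fact that $H$ connected forces $\theta_j>0$ for $j\le n-1$.

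For the \textbf{upper bound}: from $\mu_i\le\theta_{i+r}$ (valid for the relevant range of $i$), we get $\frac{1}{\mu_i}\ge\frac{1}{\theta_{i+r}}$, which is the wrong direction, so instead I use $\mu_i\ge\mu_{i+r}\ge\cdots$; more carefully, from Lemma~\ref{thm:removingredges} with the roles arranged as $\theta_i\ge\mu_{i+r}$ we obtain $\mu_{i+r}\le\theta_i$, i.e. for indices $i$ with $2\le i\le n-1$ write $i=j+r$ when possible. I would split $\sum_{i=2}^{n}\frac{1}{\mu_i}$ into the block where the interlacing index shift keeps us inside $[1,n]$ and a boundary block of $r$ terms. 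The $r$ terms near the top of the spectrum (largest indices, smallest $\mu$-indices) are each at least... — here one must be careful: the terms $\frac{1}{\mu_i}$ with $i$ close to $n$ are the smallest, bounded above by $\frac{1}{\mu_{n-1}}$, giving the tail contribution $\frac{r}{\mu_{n-1}}$; the remaining $n-1-r$ terms are bounded via $\mu_i\ge\theta_{i-r}$ (reindexing $j=i-r$, so $j$ runs $1$ to $n-1-r$, but to match the stated bound $\sum_{j=r+1}^{n-1}\frac{1}{\theta_j}$ one instead uses the other interlacing inequality $\mu_i\le\theta_{i+r}$ is not what we want — rather $\frac{1}{\mu_i}\le\frac{1}{\theta_{i-r}}$ is false; the correct move is $\frac{1}{\mu_i}\le \frac{1}{\mu_{\text{something}}}$). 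The cleanest route is: for $i=2,\dots,n-r$, use $\theta_{i+r-1}\le \mu_i$ is again delicate. I would therefore do the bookkeeping by pairing $\frac{1}{\mu_i}$ with $\frac{1}{\theta_{i-r}}$ using $\mu_i \le \theta_{i-r}$ (which follows from $\mu_{(i-r)+r}\le\theta_{i-r}\cdot$... ) — I will track the exact shift by writing both interlacing chains explicitly and matching endpoints with the summation limits $r+1$ and $n-1$.

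For the \textbf{lower bound}: symmetrically, the $r$ terms $\frac{1}{\mu_i}$ with $i$ smallest (i.e. $i=2,\dots$) involve the largest eigenvalues among the $\mu$'s after the shift, so each is at least $\frac{1}{\mu_1}$, contributing $\frac{r}{\mu_1}$; the remaining $n-1-r$ terms are bounded below using $\mu_i\le\theta_{i+r}$, hence $\frac{1}{\mu_i}\ge\frac{1}{\theta_{i+r}}$, and reindexing $j=i+r$ makes $j$ run from $\text{(small)}$ up to $n-1$, but shifting to match $\sum_{j=1}^{n-r-1}\frac{1}{\theta_j}$ requires using the reversed interlacing inequality $\theta_i\ge\mu_{i+r}$ rewritten as $\mu_j\le\theta_{j-r}$. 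The key structural fact making all of this work is that $H$ connected guarantees $\theta_{n-1}>0$, so every reciprocal $\frac{1}{\theta_j}$ for $j\le n-1$ is finite and positive, and $r\le n-1$ guarantees the index ranges are nonempty; these are exactly the hypotheses imposed.

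The \textbf{main obstacle} I anticipate is purely the index bookkeeping: Lemma~\ref{thm:removingredges} is stated with a particular convention for out-of-range indices, and one must be scrupulous about which of the two inequalities $\mu_{i-r}\ge\theta_i$ versus $\theta_i\ge\mu_{i+r}$ is applied to which block of the sum, and how the summation limits $r+1,\dots,n-1$ and $1,\dots,n-r-1$ emerge after reindexing. Once the split of $\sum_{i=2}^{n}\frac{1}{\mu_i}$ into an "interior" block of $n-1-r$ terms (compared term-by-term to the $\theta_j$'s) and a "boundary" block of $r$ terms (each estimated by $\frac{1}{\mu_1}$ from below, $\frac{1}{\mu_{n-1}}$ from above) is set up correctly, both inequalities follow by monotonicity of $t\mapsto 1/t$ on $(0,\infty)$ and summing. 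No deeper idea is needed beyond Eq.~\eqref{Lovasz} and Lemma~\ref{thm:removingredges}; the content is in getting the shifts right.
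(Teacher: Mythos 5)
Your overall plan is the paper's: write $K(G)=\sum_{i=1}^{n-1}1/\mu_i$ over the nonzero eigenvalues (in the decreasing order of the theorem, not the increasing order of the preliminaries that your $\sum_{i=2}^{n}1/\mu_i$ suggests), split this into an interior block of $n-r-1$ terms compared term-by-term with reciprocals of $\theta$'s via Lemma \ref{thm:removingredges}, and a boundary block of $r$ terms bounded by $1/\mu_1$ from below and $1/\mu_{n-1}$ from above. Your lower bound is assembled essentially correctly this way: the $r$ boundary terms satisfy $1/\mu_i\ge 1/\mu_1$, and the interior terms satisfy $1/\mu_{j+r}\ge 1/\theta_j$ for $j=1,\dots,n-r-1$, coming from $\theta_j\ge\mu_{j+r}$.

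The genuine gap is in the upper bound, where a sign error makes you discard the correct tool and you never recover it. Lemma \ref{thm:removingredges} gives $\mu_{i-r}\ge\theta_i$, which after the substitution $i\mapsto i+r$ reads $\mu_i\ge\theta_{i+r}$ --- not $\mu_i\le\theta_{i+r}$ as you write. Hence $1/\mu_i\le 1/\theta_{i+r}$, which is exactly the direction needed for the upper bound, and it is legitimate for $i=1,\dots,n-r-1$ because then $i+r\le n-1$ and connectivity of $H$ forces $\theta_{i+r}>0$. Having mislabeled this as ``the wrong direction,'' you switch to the other half of the interlacing, $\theta_i\ge\mu_{i+r}$, i.e.\ $\mu_j\le\theta_{j-r}$, which yields $1/\mu_j\ge 1/\theta_{j-r}$ --- a lower bound on the reciprocals, useless for bounding $K(G)$ from above --- and the paragraph ends with the bookkeeping deferred rather than resolved. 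The fix is one line, and it is what the paper does: $\sum_{i=1}^{n-r-1}1/\mu_i\le\sum_{i=1}^{n-r-1}1/\theta_{i+r}=\sum_{j=r+1}^{n-1}1/\theta_j$, while the remaining $r$ terms $1/\mu_i$, $i=n-r,\dots,n-1$, are each at most $1/\mu_{n-1}$. Two smaller slips of the same kind: in the lower-bound paragraph you again assert $\mu_i\le\theta_{i+r}$, which is false as stated; and the extreme cases $r=n-1$ and $r=n-2$, which the paper treats separately, are precisely where your index ranges degenerate and deserve an explicit sentence.
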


\begin{proof}
If $r=n-1$, the result is well-known since $\frac{n-1}{\mu_1}\leq K(G)\leq \frac{n-1}{\mu_{n-1}}.$ If $r=n-2$, applying Lemma \ref{thm:removingredges}, we obtain
$$
K(G) = \sum_{i=1}^{n-1}\frac{1}{\mu_i}=\frac{1}{\mu_1} + \sum_{i=2}^{n-1} \frac{1}{\mu_i}\le \frac{1}{\theta_{n-1}} + \sum_{i=2}^{n-1} \frac{1}{\mu_i}.$$
Consider now $r\le n-3$, applying again Lemma \ref{thm:removingredges}, we obtain
$$
K(G) = \sum_{i=1}^{n-1}\frac{1}{\mu_i}=\sum_{i=1}^{n-r-1} \frac{1}{\mu_i} + \sum_{i=n-r}^{n-1} \frac{1}{\mu_i}
\leq \sum_{i=1}^{n-r-1} \frac{1}{\theta_{i+r}}+\frac{r}{\mu_{n-1}}, 
$$
and to simplify notation we write $j=i+r.$
Similarly, one can prove the lower bound using the remaining interlacing inequalities.
\end{proof}

Observe that the bounds in the above theorem can be written as
$$K(H)-\sum_{j=n-r}^{n-1}\frac{1}{\theta_{j}}+\frac{r}{\mu_1} \leq K(G)\leq K(H)-\sum_{j=1}^{r} \frac{1}{\theta_{j}}+\frac{r}{\mu_{n-1}}.$$


\begin{corollary}\label{coro:removingoneedge}
Let $G$ be a connected graph, and let $H$ be a subgraph of $G$ obtained by deleting an edge such that $H$ is connected. Then
$$K(H)-\frac{1}{\theta_{n-1}}+\frac{1}{\mu_1} \leq K(G) \leq K(H)-\frac{1}{\theta_1}+ \frac{1}{\mu_{n-1}}.$$  
\end{corollary}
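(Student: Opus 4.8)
The plan is to derive Corollary \ref{coro:removingoneedge} as the immediate special case $r=1$ of Theorem \ref{thm:detemingredges}. First I would note that deleting a single edge means $r=1\le n-1$, and by hypothesis the resulting subgraph $H$ is connected, so the theorem applies verbatim. The cleanest route is to invoke the reformulated version of the bounds stated just after the proof of Theorem \ref{thm:detemingredges}, namely
$$K(H)-\sum_{j=n-r}^{n-1}\frac{1}{\theta_{j}}+\frac{r}{\mu_1} \leq K(G)\leq K(H)-\sum_{j=1}^{r} \frac{1}{\theta_{j}}+\frac{r}{\mu_{n-1}}.$$
Setting $r=1$ collapses both sums to a single term: the left-hand sum $\sum_{j=n-1}^{n-1}\frac{1}{\theta_j}$ is just $\frac{1}{\theta_{n-1}}$, and the right-hand sum $\sum_{j=1}^{1}\frac{1}{\theta_j}$ is just $\frac{1}{\theta_1}$, while $\frac{r}{\mu_1}=\frac{1}{\mu_1}$ and $\frac{r}{\mu_{n-1}}=\frac{1}{\mu_{n-1}}$. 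This yields exactly
$$K(H)-\frac{1}{\theta_{n-1}}+\frac{1}{\mu_1} \leq K(G) \leq K(H)-\frac{1}{\theta_1}+ \frac{1}{\mu_{n-1}},$$
which is the claimed inequality.

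Alternatively, one can argue directly from Lemma \ref{thm:removingredges} with $r=1$: the interlacing gives $\mu_{i-1}\ge\theta_i\ge\mu_{i+1}$ for all $i$, hence $\theta_i\ge\mu_{i+1}$ for $i=1,\dots,n-1$ and $\theta_i\le\mu_{i-1}$ for $i=2,\dots,n-1$. Writing $K(G)=\sum_{i=1}^{n-1}\frac{1}{\mu_i}$ via Eq.~\eqref{Lovasz} (after the substitution $\mu_i=1-\lambda_i$, the normalized Laplacian eigenvalues), one splits off the term $\frac{1}{\mu_1}$ and bounds $\frac{1}{\mu_i}\le\frac{1}{\theta_{i-1}}$ for $i=2,\dots,n-1$ to get the upper bound, and similarly splits off $\frac{1}{\mu_{n-1}}$ and uses $\frac{1}{\mu_i}\le\frac{1}{\theta_{i+1}}$ for $i=1,\dots,n-2$ (equivalently $\frac{1}{\mu_i}\ge\frac{1}{\theta_{i+1}}$ in the reverse direction) for the lower bound, finally re-expressing $\sum_{i}\frac{1}{\theta_i}$ as $K(H)$ minus the omitted terms. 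Since $H$ is connected, $\theta_1,\dots,\theta_{n-1}$ are all positive, so no division-by-zero issue arises and $K(H)=\sum_{j=1}^{n-1}\frac{1}{\theta_j}$ is well-defined.

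There is essentially no obstacle here: the corollary is purely bookkeeping on the index ranges, and the only thing to be careful about is the off-by-one in the summation limits when specializing $r=1$ — making sure $\sum_{j=n-r}^{n-1}$ and $\sum_{j=1}^{r}$ each reduce to a single summand and that these are $\frac{1}{\theta_{n-1}}$ and $\frac{1}{\theta_1}$ respectively, matching the statement. I would therefore present the proof as a one-line deduction from the displayed reformulation following Theorem \ref{thm:detemingredges}, which is the shortest and least error-prone option.
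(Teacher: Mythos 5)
Your proposal is correct and matches the paper exactly: the corollary is stated immediately after the reformulated bounds $K(H)-\sum_{j=n-r}^{n-1}\frac{1}{\theta_{j}}+\frac{r}{\mu_1} \leq K(G)\leq K(H)-\sum_{j=1}^{r} \frac{1}{\theta_{j}}+\frac{r}{\mu_{n-1}}$ and the paper gives no further argument, since setting $r=1$ is exactly the bookkeeping you describe. Your index check (each sum collapsing to $\frac{1}{\theta_{n-1}}$ and $\frac{1}{\theta_1}$ respectively) is right, and the alternative direct derivation from Lemma \ref{thm:removingredges} is a valid but unnecessary extra.
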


\begin{example}
Consider the complete graph $K_n$, we know that $\mu_n=0$ and $\mu_i=\dfrac{n}{n-1}$, $i=1,\ldots,n-1$ and $K(K_n)=\dfrac{(n-1)^2}{n}.$ Let $H=K_n\setminus\{e\}$, where $e$ is an edge of $K_n.$ In this case, we can compute the eigenvalues for the normalized Laplacian and we get that $\theta_n=0,$ $\theta_{n-1}=1,$ $\theta_j=\dfrac{n+1}{n-1},$ $j=2,\ldots,n-2$ and $\theta_1=\dfrac{n+1}{n-1}.$ Then, $$K(H)=1+\dfrac{n-1}{n+1}+\dfrac{(n-1)(n-3)}{n}.$$
Applying Corollary \ref{coro:removingoneedge}, we obtain the following bounds 
$$\dfrac{(n-1)(n-2)}{n}+\dfrac{(n-1)}{(n+1)}\le \dfrac{(n-1)^2}{n}\le \dfrac{(n-1)(n-2)}{n}+1,$$
which are assymptotically tight. 
\end{example}

\subsection*{Acknowledgements}
 
Aida Abiad is supported by the Dutch Research Council (NWO) through the grants VI.Vidi.213.085 OCENW.KLEIN.475. This work has been partially supported by the Spanish
Research Council under project 
PID2021-122501NB-I00 and by the Universitat Polit\`ecnica de Catalunya under funds AGRUPS-UPC 2023 and 2024. \'A. Samperio was supported by a FPI grant of the Research Project PGC2018-096446-BC21.



\end{document}